\def\pmod #1{\ ({\rm{mod}}\ #1)}
\def\Z{\Bbb Z}
\def\N{\Bbb N}
\def\l{\left}
\def\r{\right}
\def\t{\text}
\def\f{\frac}
\def\bi{\binom}
\def\eq{\equiv}
\def\da{\delta}
\def\Proof{\noindent{\it Proof}}
\def\Ack{\medskip\noindent {\bf Acknowledgments}}
\theoremstyle{plain}
\newtheorem{theorem}{Theorem}
\newtheorem{lemma}{Lemma}
\theoremstyle{definition}
\theoremstyle{remark}
\def \N{\mathbb{N}}
\begin{document}
 \baselineskip=16pt
\hbox{Int. J. Number Theory 14(2018), no.\,5, 1265--1277.}
\medskip

\title
[Hankel-type determinants for some combinatorial sequences]
{Hankel-type determinants for some combinatorial sequences}

\author[Bao-Xuan Zhu and Zhi-Wei Sun] {Bao-Xuan Zhu and Zhi-Wei Sun}

\thanks{Both authors are supported by the National Natural Science Foundation of China (grants 11571150 and 11571162 respectively)}

\address {(Bao-Xuan Zhu) School of Mathematical Sciences, Jiangsu Normal
University, Xuzhou  221116, People's Republic of China}
\email{bxzhu@jsnu.edu.cn}

\address {(Zhi-Wei Sun) Department of Mathematics, Nanjing
University, Nanjing 210093, People's Republic of China}
\email{{\tt zwsun@nju.edu.cn}}

\keywords{Combinatorial sequences, divisibility and positivity, Hankel-type determinants.
\newline \indent 2010 {\it Mathematics Subject Classification}. Primary 05A10, 11B65; Secondary 11A07, 11C20, 15B99.}

 \begin{abstract} In this paper we confirm several conjectures of Sun on Hankel-type determinants for some combinatorial sequences
 including Franel numbers, Domb numbers and Ap\'ery numbers.
 For any nonnegative integer $n$, define
 \begin{gather*}f_n:=\sum_{k=0}^n\bi nk^3,\ D_n:=\sum_{k=0}^n\bi nk^2\bi{2k}k\bi{2(n-k)}{n-k},
 \\b_n:=\sum_{k=0}^n\bi nk^2\bi{n+k}k,\ A_n:=\sum_{k=0}^n\bi nk^2\bi{n+k}k^2.
 \end{gather*}
For $n=0,1,2,\ldots$, we show that $6^{-n}|f_{i+j}|_{0\leq i,j\leq n}$ and $12^{-n}|D_{i+j}|_{0\le i,j\le n}$
are positive odd integers, and $10^{-n}|b_{i+j}|_{0\leq i,j\leq n}$ and $24^{-n}|A_{i+j}|_{0\leq i,j\leq n}$ are always integers.
\end{abstract}

\maketitle

\section{Introduction}
\setcounter{lemma}{0} \setcounter{theorem}{0}
\setcounter{corollary}{0} \setcounter{remark}{0}
\setcounter{equation}{0} \setcounter{conjecture}{0}

For a sequence $(a_n)_{n\ge 0}$ of complex numbers,  its {\it Hankel
matrix} is given by
$$H=[a_{i+j}]_{i,j\ge 0}=
\left[
  \begin{array}{ccccc}
    a_0 & a_1 & a_2 & a_3 & \cdots \\
    a_1 & a_2 & a_3 & a_4 & \\
    a_2 & a_3 & a_4 & a_5 & \\
    a_3 & a_4 & a_5 & a_6 & \\
    \vdots &  &  &  & \ddots \\
  \end{array}
\right].$$   Hankel
matrices are related to orthogonal polynomials, moment sequences and
continued fractions, and they have been extensively studied in many
branches of mathematics (see, e.g., \cite{KS,ST43}).
The Hankel-type determinants for the sequence $a_0,a_1,a_2,\ldots$
are those determinants $|a_{i+j} |_{0\le i,j\le n}$ with $n\in\N=\{0,1,2,\ldots\}$, which are sometimes called
Tur\'{a}nian determinants (cf. Karlin and Szeg\"{o}~\cite{KS}). For evaluations of Hankel-type determinants,
LU decomposition, continued fractions and Dodgson
condensation are some of the available tools that have been used
with considerable success. See Krattenthaler \cite{Kra99,Kra05}  for a wide range of
techniques used to evaluate certain Hankel-type determinants.

In this paper we study positivity and divisibility properties of certain Hankel-type determinants for some well-known combinatorial sequences,
and confirm several conjectures of Sun \cite{S13c}.

Recall that the Franel numbers are defined by
$$f_n:=\sum_{k=0}^n\binom{n}{k}^3\ \ (n=0,1,2,\ldots).$$
For $r=3,4,5,\ldots$, the $r$-th order Franel numbers are given by
$$f_n^{(r)}:=\sum_{k=0}^n\bi nk^r\ \ (n=0,1,2,\ldots).$$
In 2013 Sun \cite{S13a,S13b} proved some fundamental congruences involving Franel numbers, for example, he showed that
for any prime $p>3$ we have
$$\sum_{k=0}^{p-1}(-1)^kf_k\eq\l(\f p3\r)\pmod p,\ \sum_{k=0}^{p-1}\f{(-1)^k}kf_k\eq0\pmod{p^2},$$
and
\begin{align*}\sum_{k=0}^{p-1}\f{f_k}{2^k}
\eq\begin{cases}2x-p/(2x)\pmod{p^2}&\mbox{if}\  p=x^2+3y^2\ (x,y\in\Z)\ \mbox{with}\ 3\mid x-1,
\\3p/\binom{(p+1)/2}{(p+1)/6}\pmod{p^2}&\mbox{if}\ p\eq2\pmod 3.\end{cases}.
\end{align*}

Our first theorem is about Hankel-type determinants for Franel numbers or generalized Franel numbers.

\begin{theorem}\label{Th1.1} Let $n\in\N$ and $r\in\{3,4,\ldots\}$. Then $2^{-n}|f^{(r)}_{i+j}|_{0\leq i,j\leq n}$ is an odd
integer. Furthermore, $6^{-n}|f_{i+j}|_{0\leq i,j\leq n}$ is a positive odd integer.
\end{theorem}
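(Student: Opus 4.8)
The determinant $D_n:=|f_{i+j}|_{0\le i,j\le n}$ is automatically a rational integer, so the theorem splits into three essentially independent assertions: (i) $D_n>0$; (ii) the exact $2$-adic valuation is $v_2(D_n)=n$ (this is the ``$2^{-n}$ is odd'' part, and it is the whole content of the $f^{(r)}$ statement); and (iii) $3^n\mid D_n$. Two structural tools drive the plan. First, Hankel determinants are invariant under the binomial transform: if $a_m=\sum_k\bi mk t^{m-k}b_k$ then $|a_{i+j}|_{0\le i,j\le n}=|b_{i+j}|_{0\le i,j\le n}$ for every $t$, since the transform realises $(a_{i+j})=P(b_{i+j})P^{\mathrm t}$ with $P$ unit lower triangular; this lets me replace the Franel sequence by a more convenient one without changing $D_n$. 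Second, expanding $\sum_m a_mx^m$ as a Jacobi continued fraction with coefficients $\la_k$ gives $D_n=a_0^{n+1}\prod_{k=1}^n\la_k^{\,n+1-k}$, so every sign and valuation question reduces to the corresponding one for the $\la_k$ (equivalently $\la_k=D_kD_{k-2}/D_{k-1}^2$ with the convention $D_{-1}=1$).

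\emph{Positivity.} I would obtain (i) from the orthogonal-polynomial/moment framework: exhibit a Stieltjes- or Jacobi-type continued fraction for $\sum_nf_nx^n$ (equivalently, a positive measure whose moments are the $f_n$) forcing all $\la_k>0$, whence the Hankel matrix is positive definite and all its leading minors $D_n$ are strictly positive. This is the ``soft'' part, where the first author's total-positivity machinery enters. The small cases $\la_1=6,\ \la_2=5$ coming from $D_0=1,\ D_1=6,\ D_2=180$ fix the normalisation and serve as a sanity check.

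\emph{The factor $3^n$.} This part is clean and explains why the extra $3$ is special. Since $x^3\eq x\pmod{3}$,
\[ f_m=\sum_{k=0}^m\bi mk^3\eq\sum_{k=0}^m\bi mk=2^m\pmod{3}, \]
so modulo $3$ the Hankel matrix is the rank-one matrix $(2^{i+j})=vv^{\mathrm t}$ with $v=(2^i)_i$. Applying the unimodular column operations $C_j\mapsto C_j-2C_{j-1}$ for $j=1,\dots,n$ (all referring to the original columns, i.e.\ right multiplication by the unit upper-bidiagonal matrix with $-2$ on the superdiagonal) turns every entry of columns $1,\dots,n$ into $f_{i+j}-2f_{i+j-1}\eq 2^{i+j}-2\cdot 2^{i+j-1}=0\pmod{3}$. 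As these operations preserve the determinant, each of the $n$ altered columns contributes a factor $3$, giving $3^n\mid D_n$. The identical argument works whenever $x^r\eq x\pmod{3}$, i.e.\ for odd $r$, which is precisely what singles out the role of $r=3$.

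\emph{The factor $2^n$, exactly.} This is the crux. The divisibility $2^n\mid D_n$ is immediate: by Kummer/Lucas the number of odd summands in $\sum_k\bi mk^3$ equals $2^{s(m)}$, where $s(m)$ is the binary digit sum, so $f_m\eq0\pmod{2}$ for every $m\ge1$; hence columns $1,\dots,n$ of $(f_{i+j})$ are entirely even and can be divided by $2$, yielding $D_n=2^n\det M'$ with $M'_{i,0}=f_i$ and $M'_{i,j}=f_{i+j}/2$ for $j\ge1$. The real task is to prove $\det M'$ is \emph{odd}, equivalently $v_2(\la_1)=1$ and $v_2(\la_k)=0$ for all $k\ge2$. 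I expect this to be the main obstacle: it is not a single congruence but a statement about the entire tower of $2$-adic digits of the Franel numbers, whose valuations $v_2(f_m)=0,1,1,3,1,2,\dots$ are already irregular, so that controlling $f_{i+j}/2\bmod 2$ needs a refined Lucas-type description of $\sum_k\bi mk^3\bmod 2^s$. My plan is to reduce $M'$ over $\Z_2$ by further integer row/column operations (and, if helpful, a well-chosen binomial transform) to an explicit matrix whose determinant is visibly a $2$-adic unit; $v_2(D_n)=n$ follows. Carrying out the same $2$-adic reduction with $\bi mk^r$ in place of $\bi mk^3$ should establish the $f^{(r)}$ statement uniformly for all $r\ge3$, which in particular supplies the $2$-adic half of the Franel claim.
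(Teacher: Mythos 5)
Your reductions of the positivity and $3$-adic parts are fine in outline, and your column-operation argument for $3^n\mid D_n$ (from $f_m\eq 2^m\pmod 3$ and the rank-one structure mod $3$) is correct and slightly more direct than the paper's, which instead passes to the binomial transform $g_m=\sum_k\bi mk f_k=\sum_k\bi mk^2\bi{2k}k$ (Barrucand's identity) and notes $g_m\eq 3^m\eq0\pmod 3$ for $m\ge1$. Note, though, that this same identity is what makes the positivity part work in the paper: $(\bi{2m}m)_{m\ge0}$ is a Stieltjes moment sequence, the Wang--Zhu closure lemma then makes $(g_m)$ one, and Hankel invariance gives $|f_{i+j}|_{0\le i,j\le n}=|g_{i+j}|_{0\le i,j\le n}\ge0$, with strict positivity falling out of oddness. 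You only gesture at ``exhibit a positive measure or Stieltjes continued fraction,'' so that half of your argument is a sketch without its enabling identity.

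The genuine gap is the $2$-adic crux, exactly where you say ``my plan is to reduce $M'$ over $\Z_2$ \dots to an explicit matrix whose determinant is visibly a $2$-adic unit'': nothing in the proposal carries this out, so as written you prove only $2^n\mid D_n$, not the exact valuation, and the entire $f^{(r)}$ statement remains open. Moreover, what is needed is far less than a ``refined Lucas-type description of $\sum_k\bi mk^r\bmod 2^s$'': one needs only $f^{(r)}_m\bmod 4$, namely that $f^{(r)}_m\eq2\pmod 4$ precisely when $m$ is a power of two. This follows from Calkin's bound $2^{\ell(m)}\mid f^{(r)}_m$ (with $\ell(m)$ the binary digit sum), which gives $4\mid f^{(r)}_m$ for $m$ not a power of two, together with the observation that for $m=2^a$ every $\bi mk$ with $0<k<m$ is even, whence $f^{(r)}_m\eq2\pmod{2^r}$. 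Granting this, the reduced matrix $[f^{(r)}_{i+j}/2\bmod 2]_{1\le i,j\le n}$ is the explicit $0$-$1$ matrix with a $1$ exactly where $i+j$ is a power of two, and the paper's Lemma 2.3 proves its determinant is $\pm1$ by an induction you would still have to supply: writing $n=2^m+t$ with $1\le t\le 2^m-2$, unimodular row and column operations (row $i$ minus row $2^m+(2^m-j)$, and the symmetric column move) split off a $(2t+1)\times(2t+1)$ anti-diagonal block and reduce $|B_n|$ to $|B_{n-2t-1}|$. That combinatorial determinant evaluation is the heart of the theorem and is absent from your proposal; your continued-fraction reformulation ($v_2(\la_1)=1$, $v_2(\la_k)=0$ for $k\ge2$) merely restates the goal and, given the irregular valuations $v_2(f_m)$ you yourself note, looks harder to verify directly than the mod-$4$ matrix argument.
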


The Domb numbers defined by
$$D_n=\sum_{k=0}^n\binom{n}k^2\binom{2k}k\bi{2(n-k)}{n-k}\ \ (n\in\N)$$
have various combinatorial interpretations, for example, $D_n$ is the number of $2n$-step polygons on the diamond lattice.
The so-called Catalan-Larcombe-French numbers are given by
$$P_n=\sum_{k=0}^n\frac{\binom{2k}{k}^2\binom{2(n-k)}{n-k}^2}{\binom{n}{k}}\ (n\in\N).$$
Both Domb numbers and Catalan-Larcombe-French numbers are related to Ramanujan-type series for $1/\pi$
(cf. \cite{CCL,CC}).

Our second result is about Hankel-type determinants for Domb numbers as well as Catalan-Larcombe-French numbers.

\begin{theorem}\label{Th1.2} For $n\in\N$, both
$12^{-n}|D_{i+j}|_{0\leq i,j\leq n}$ and
$2^{-n(n+3)}|P_{i+j}|_{0\leq i,j\leq n}$ are positive odd integers.
\end{theorem}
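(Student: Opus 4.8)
The plan is to reduce both assertions to one statement about the $2$-adic (and, for the Domb numbers, $3$-adic) valuation of a Hankel determinant, and then to read that valuation off a Jacobi continued fraction. First I would record the identity $P_n=2^nQ_n$, where
$$Q_n:=\sum_{k=0}^n\binom nk\binom{2k}k\binom{2(n-k)}{n-k},$$
which one verifies directly from the defining sums. Scaling the $i$-th row and the $j$-th column of $[P_{i+j}]_{0\le i,j\le n}$ by $2^i$ and $2^j$ gives
$$|P_{i+j}|_{0\le i,j\le n}=2^{n(n+1)}\,|Q_{i+j}|_{0\le i,j\le n},$$
so the assertion for $P$ becomes: $4^{-n}|Q_{i+j}|_{0\le i,j\le n}$ is a positive odd integer. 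Since $Q_n,D_n\in\Z$ the determinants are integers, and the whole content of Theorem~\ref{Th1.2} is then the positivity of the two determinants, the exact valuation $v_2=2n$ in both cases, and, for Domb, the extra bound $v_3\ge n$ (no other primes enter, as we only divide by $4^n$ and $12^n$).

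The engine is the product formula attached to the $J$-fraction
$$\sum_{n\ge0}a_nx^n=\cfrac{1}{1-b_0x-\cfrac{\lambda_1x^2}{1-b_1x-\cfrac{\lambda_2x^2}{1-\cdots}}}\qquad(a_0=1),$$
for which $|a_{i+j}|_{0\le i,j\le n}=\prod_{k=1}^n\lambda_k^{\,n+1-k}$. Both sequences obey Ap\'ery-like three-term recurrences,
$$n^2Q_n=4(3n^2-3n+1)Q_{n-1}-32(n-1)^2Q_{n-2},$$
$$n^3D_n=2(2n-1)(5n^2-5n+2)D_{n-1}-64(n-1)^3D_{n-2},$$
from which I would extract the recurrence coefficients $(b_k,\lambda_k)$ of the associated monic orthogonal polynomials. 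Positivity is the soft part: writing $\binom{2k}k=\int_0^4t^k\,d\alpha(t)$ for the arcsine law $\alpha$ on $[0,4]$ exhibits $Q_n=\iint(s+t)^n\,d\alpha(s)\,d\alpha(t)$ as the moments of the positive measure $\alpha*\alpha$, so $[Q_{i+j}]$ is positive definite and $|Q_{i+j}|>0$; for $D_n$ one shows the corresponding $\lambda_k>0$ and applies Favard's theorem.

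The crux is the arithmetic of the $\lambda_k$. By the product formula $v_2\big(|Q_{i+j}|_{0\le i,j\le n}\big)=\sum_{k=1}^n(n+1-k)\,v_2(\lambda_k)$, which equals $2n$ for every $n$ exactly when $v_2(\lambda_1)=2$ and $\lambda_k$ is a $2$-adic unit for all $k\ge2$; for $D_n$ one needs in addition $v_3(\lambda_1)\ge1$ and $v_3(\lambda_k)\ge0$ for $k\ge2$ to force $v_3\ge n$, consistent with $\lambda_1=12$. The real difficulty is that these $\lambda_k$ are genuinely non-integral rationals: for $Q$ one computes $\lambda_1=4,\ \lambda_2=5,\ \lambda_3=19/5,\ \lambda_4=409/95$, and the reduced determinants $4^{-n}|Q_{i+j}|=1,1,5,95,7771,\dots$ exhibit no product pattern, so the odd denominators cancel only after forming the weighted product and one cannot hope to prove integrality of an individual coefficient. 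Hence the heart of the argument is a congruential induction: convert the three-term recurrences into a self-contained recurrence for $(b_k,\lambda_k)$ and prove, working modulo powers of $2$ (and of $3$ for $D_n$), that $\lambda_1=4$ (resp. $12$) while for $k\ge2$ the numerator and denominator of $\lambda_k$ are both odd (resp. prime to $3$). I expect this $2$-adic and $3$-adic bookkeeping along the continued-fraction recurrence, rather than any closed-form evaluation of the determinants, to be the main obstacle.
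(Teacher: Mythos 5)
Your opening reductions match the paper: the identity $P_n=2^nQ_n$ with $Q_n=D^{(1)}_n$, the row/column scaling extracting $2^{n(n+1)}$, and the positivity of $|Q_{i+j}|$ via the arcsine representation $\binom{2k}{k}=\int_0^4 t^k\,d\alpha(t)$ (a concrete instance of the P\'olya--Szeg\H{o} convolution lemma the paper cites) are all sound. But the heart of the theorem --- that $v_2$ of both Hankel determinants equals exactly $2n$, and $v_3(|D_{i+j}|_{0\le i,j\le n})\ge n$ --- is not proved: you defer it to a ``congruential induction'' on the $J$-fraction coefficients $(b_k,\lambda_k)$ which you never carry out, and your own data ($\lambda_3=19/5$, $\lambda_4=409/95$) indicate why it cannot be carried out along the route you sketch. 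There is no self-contained recurrence in $k$ for $(b_k,\lambda_k)$ derivable from the Ap\'ery-like recurrence in $n$ for the moments; the standard expression $\lambda_k=\Delta_k\Delta_{k-2}/\Delta_{k-1}^2$ in terms of the Hankel determinants $\Delta_m$ makes any attempt to control the $2$- and $3$-adic valuations of the $\lambda_k$ circular, since it presupposes the very determinant valuations you are trying to establish. So what you label ``the main obstacle'' is in fact the entire content of the theorem, and the proposal is a plan with its central step missing. A second, smaller gap: for $D_n$ you propose positivity by showing ``$\lambda_k>0$ and applying Favard,'' but $\lambda_k>0$ for all $k$ is \emph{equivalent} to positivity of all the Hankel determinants, so this is circular as stated; one needs an independent moment representation, and the plain binomial convolution of arcsine measures does not give one because $D_n$ carries the weight $\binom{n}{k}^2$ rather than $\binom{n}{k}$.

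The paper's proof avoids continued fractions entirely, and the contrast shows what is missing. For the $2$-part it proves a purely combinatorial lemma: if $x_0=1$, $2k\mid x_i$ for all $i\ge1$, and $4k\mid x_i$ precisely when $i$ is not a power of two, then $(2k)^{-n}|x_{i+j}|_{0\le i,j\le n}$ is odd; this is established by reducing the matrix $[x_{i+j}/(2k)]_{1\le i,j\le n}$ modulo $2$ to a sparse $(0,1)$-matrix whose $1$'s sit on anti-diagonals indexed by powers of two, and showing its determinant is $\pm1$ by an explicit row/column elimination and induction. The hypothesis is verified for $D^{(m)}_n$ by the congruence $D^{(m)}_n\equiv 4\binom{2n-1}{n-1}\pmod 8$ together with the Lucas-theorem fact that $\binom{2n-1}{n-1}$ is odd exactly when $n$ is a power of two. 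For the $3$-part it proves $D_n\equiv1\pmod3$ (by the Domb recurrence plus a Lucas argument when $3\mid n$), deduces that the twice-iterated binomial transform satisfies $D''_n\equiv3^n\equiv0\pmod3$ for $n\ge1$, and then uses Layman's invariance of Hankel determinants under the binomial transform to conclude $3^n\mid|D_{i+j}|$; positivity of $|D_{i+j}|$ comes from the Wang--Zhu lemma that the $\binom{n}{k}^2$-convolution of Stieltjes moment sequences is again a Stieltjes moment sequence. None of these ideas --- the mod-$2$ determinant lemma keyed to powers of two, the binomial-transform trick for the prime $3$, and the squared-binomial convolution lemma --- appears in your proposal, and they are precisely what fills the gaps you left open.
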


The integers
$$b_n=\sum_{k=0}^n\binom{n}{k}^2\binom{n+k}{k}\ \ (n=0,1,2,\ldots)$$ and
$$A_n=\sum_{k=0}^n\binom{n}{k}^2\binom{n+k}{k}^2\ \ (n=0,1,2,\ldots)$$
are two kinds of Ap\'ery numbers. They were first introduced by R. Ap\'ery \cite{A}
in his proofs of the irrationality of $\zeta(2)=\sum_{n=1}^\infty1/n^2=\pi^2/6$ and $\zeta(3)=\sum_{n=1}^\infty1/n^3$.
Such numbers are also related to Ramanujan-type series for $1/\pi$
(cf. \cite{CC}). For congruences involving Ap\'ery numbers $A_n\ (n\in\N)$, see \cite{S11}.

Now we state our last theorem.

\begin{theorem}\label{Th1.3}
For $n\in\N$, both $10^{-n}|b_{i+j}|_{0\leq i,j\leq n}$ and $24^{-n}|A_{i+j}|_{0\leq i,j\leq n}$ are always integers.
\end{theorem}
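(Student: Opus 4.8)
The plan is to use the normalization $b_0=A_0=1$ to collapse each Hankel determinant to a smaller determinant whose every entry is divisible by the target modulus. Set $M=[b_{i+j}]_{0\le i,j\le n}$. Because $b_0=1$, the integer row operations $R_i\mapsto R_i-b_iR_0$ for $1\le i\le n$ are unimodular and preserve $\det M$; they clear the first column to $(1,0,\ldots,0)^{\top}$ and turn the $(i,j)$ entry with $i,j\ge1$ into $b_{i+j}-b_ib_j$. Expanding along the first column yields
$$|b_{i+j}|_{0\le i,j\le n}=\bigl|\,b_{i+j}-b_ib_j\,\bigr|_{1\le i,j\le n}.$$
Hence, if $10\mid b_{i+j}-b_ib_j$ for all $1\le i,j\le n$, then pulling a factor $10$ out of each of the $n$ rows shows $10^n$ divides the determinant; the same computation with $A_n$ and modulus $24$ gives the other half. (This argument forces only divisibility, not oddness, which is consistent with Theorem~\ref{Th1.3} claiming integrality alone.)

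Everything thus reduces to the multiplicative congruences $b_{i+j}\equiv b_ib_j\pmod{10}$ and $A_{i+j}\equiv A_iA_j\pmod{24}$ for $i,j\ge1$, and I would obtain these from the sharper uniform statements
$$b_n\equiv 3^n\pmod{10}\qquad\text{and}\qquad A_n\equiv 5^n\pmod{24}\quad(n\ge0),$$
since then $b_{i+j}\equiv3^{i+j}=3^i3^j\equiv b_ib_j$ and likewise for $A$ (observe $b_1=3$, $A_1=5$). By the Chinese Remainder Theorem it suffices to prove $b_n\equiv3^n$ modulo $2$ and modulo $5$, and $A_n\equiv5^n$ modulo $8$ and modulo $3$.

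For the $2$-power moduli I would argue with Kummer's and Lucas' theorems. Reducing $b_n=\sum_k\binom nk^2\binom{n+k}k$ modulo $2$, a summand is odd only when $\binom nk$ and $\binom{n+k}k$ are both odd, i.e. when $k$ is a binary submask of $n$ and $k,n$ have disjoint binary supports; together these force $k=0$, so $b_n\equiv1\pmod{2}$, matching $3^n$. Writing $A_n=\sum_k\bigl(\binom nk\binom{n+k}k\bigr)^2$, the same count gives a single odd term, so $A_n\equiv1+4t\pmod{8}$ where $t=\#\{k\ge1:\binom nk\binom{n+k}k\equiv2\pmod{4}\}$; since $5^n\equiv1$ or $5\pmod{8}$ according to the parity of $n$, the claim becomes $t\equiv n\pmod{2}$, which I would read off from a Kummer carry-count for $v_2\bigl(\binom nk\binom{n+k}k\bigr)$. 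For the odd primes I would invoke the Lucas-type congruences for these Ap\'ery numbers: if $n=\sum_i d_ip^i$ in base $p\in\{3,5\}$ then $b_n\equiv\prod_i b_{d_i}\pmod p$ and $A_n\equiv\prod_i A_{d_i}\pmod p$. Because $b_d\equiv3^d\pmod{5}$ for $0\le d\le4$ and $5\equiv1\pmod{4}$ forces $n\equiv\sum_i d_i\pmod{4}$, the product collapses to $3^{\sum_i d_i}\equiv3^n\pmod{5}$; the mod-$3$ statement follows identically from $A_0\equiv A_2\equiv1$ and $A_1\equiv-1\pmod{3}$.

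The routine part is the linear algebra of the first paragraph; the real work is the number theory, and I expect two steps to be the main obstacle. First, the modulus $8$ for $A_n$ is a prime power outside the reach of Lucas' theorem, so the parity identity $t\equiv n\pmod{2}$ must be established by a careful (and somewhat delicate) $2$-adic valuation count. Second, the Lucas congruences modulo $3$ and $5$ do not fall out of the order-two Ap\'ery recurrences $n^2b_n=(11n^2-11n+3)b_{n-1}+(n-1)^2b_{n-2}$ and $n^3A_n=(34n^3-51n^2+27n-5)A_{n-1}-(n-1)^3A_{n-2}$ by naive induction: when $p\mid n$ the leading coefficient vanishes modulo $p$, so the recurrence no longer pins down $b_n$ or $A_n$, and one must supply the missing relation precisely at the multiples of $p$. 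Once these congruences are secured, the determinant divisibility is immediate.
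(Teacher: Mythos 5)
Your linear-algebra reduction is valid and genuinely different from the paper's. You exploit $b_0=A_0=1$ to perform a rank-one (Schur-complement) reduction at the corner, which correctly gives $|b_{i+j}|_{0\le i,j\le n}=|b_{i+j}-b_ib_j|_{1\le i,j\le n}$ and reduces everything to the multiplicative congruences $b_n\equiv 3^n\pmod{10}$ and $A_n\equiv 5^n\pmod{24}$. The paper instead uses Layman's theorem that Hankel determinants are invariant under the binomial transform: it shows $b_n'\equiv0\pmod 2$ and $b_n''\equiv0\pmod 5$ for $n\ge1$, so a Laplace expansion extracts $2^n$ and $5^n$; for $A_n$ it shows $A_n'\equiv0\pmod{24}$ only for $n\ge3$ and must then handle the exceptional corner entries $x_0=1$, $x_1=6$, $x_2=84$ via the combination $x_0x_2-x_1^2=48$ paired with a cofactor divisible by $24^{n-1}$. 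Your target congruences are exactly equivalent to the paper's arithmetic inputs: Gessel's $A_k\equiv3-2(-1)^k\pmod{24}$ is the same statement as $A_k\equiv5^k\pmod{24}$, and $b_k\equiv3^k\pmod 5$ together with $b_k\equiv1\pmod 2$ (which you prove correctly by Lucas/Kummer) is the same as $b_k\equiv3^k\pmod{10}$. Conditional on those congruences, your reduction is arguably cleaner, since it has no exceptional entries and avoids the paper's two-entry bookkeeping in the Laplace expansion for $A_n$.

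However, as written the proposal leaves its two hardest inputs unproven. First, the mod-$8$ statement for $A_n$ --- your parity identity $t\equiv n\pmod 2$ --- is precisely Gessel's theorem that $A_{2k}\equiv1$ and $A_{2k+1}\equiv5\pmod 8$; you only gesture at an unexecuted $2$-adic carry count, while the paper simply cites Gessel \cite{Ges}, and you should do the same (this count is genuinely delicate and is not ``read off''). Second, the Lucas-type congruences $b_n\equiv\prod_i b_{d_i}\pmod 5$ and $A_n\equiv\prod_i A_{d_i}\pmod 3$ are invoked without proof; the mod-$3$ one for $A_n$ is again in \cite{Ges}, but for $b_n$ mod $5$ you must supply either a citation or an argument. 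The paper proves exactly the needed instance by the hybrid method you yourself anticipated as the crux: induction via the recurrence $k^2b_k=(11k^2-11k+3)b_{k-1}+(k-1)^2b_{k-2}$ when $5\nmid k$, and, at multiples of $5$ where the leading coefficient degenerates, a direct Lucas-theorem computation showing that only the terms with $5\mid j$ survive in $\sum_{j}\binom kj^2\binom{k+j}j$, whence $b_k\equiv b_{k/5}\equiv 3^{k/5}\equiv 3^k\pmod 5$. Your diagnosis of where the difficulty lies is accurate, but those two steps constitute the substance of the number theory; once they are cited from Gessel or proved along the paper's lines, your argument closes and yields the theorem.
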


Theorems 1.1-1.3 were originally conjectured by
the second author \cite{S13c}. We are not able to prove Sun's conjecture (\cite{S13c})
that both $|b_{i+j}|_{0\leq i,j\leq n}$ and $|A_{i+j}|_{0\leq i,j\leq n}$ are always positive.

We will show Theorems 1.1-1.3 in Sections 2-4 respectively.

\section{Proof of Theorem \ref{Th1.1}}
\setcounter{lemma}{0} \setcounter{theorem}{0}
\setcounter{corollary}{0} \setcounter{remark}{0}
\setcounter{equation}{0} \setcounter{conjecture}{0}


The binomial transformation of a sequence $(x_k)_{k\ge0}$ of numbers is the sequence
$(x_n')_{n\ge0}$ with
\begin{equation}\label{2.1}x_n':=\sum_{k=0}^n\bi nk x_k.
\end{equation}
This often rises in combinatorics.

The following basic result is well-known and we will use it frequently.

\begin{lemma}\cite[Theorem 1]{La01}\label{le+lay}
Let $(x_k)_{k\ge0}$ be a sequence of numbers. For any $n\in\N$, we have
\begin{equation}\label{2.2}|x_{i+j}|_{0\leq i,j\leq n}=|x'_{i+j}|_{0\leq i,j\leq n}.
\end{equation}
\end{lemma}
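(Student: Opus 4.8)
The plan is to realize the Hankel matrix of the binomial transform $(x_n')$ as $LHL^{\mathsf T}$, where $H=[x_{i+j}]$ is the Hankel matrix of the original sequence $(x_k)$ and $L$ is the lower-triangular Pascal matrix; since $L$ is unipotent its determinant equals $1$, so the identity \eqref{2.2} will follow immediately from the multiplicativity of the determinant.

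First I would invoke the Vandermonde convolution $\binom{i+j}{k}=\sum_{a+b=k}\binom ia\binom jb$. Inserting it into the definition \eqref{2.1} of $x'_{i+j}$ and regrouping the sum according to $a$ and $b$ gives
\[
x'_{i+j}=\sum_{k=0}^{i+j}\binom{i+j}{k}x_k=\sum_{a=0}^{i}\sum_{b=0}^{j}\binom ia\binom jb\,x_{a+b}.
\]
I would then observe that the right-hand side is precisely the $(i,j)$-entry of $LHL^{\mathsf T}$, where $L=\left[\binom ia\right]_{0\le i,a\le n}$ and $H=\left[x_{a+b}\right]_{0\le a,b\le n}$; indeed $(LHL^{\mathsf T})_{ij}=\sum_{a,b}\binom ia\,x_{a+b}\,\binom jb$. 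Hence $[x'_{i+j}]_{0\le i,j\le n}=LHL^{\mathsf T}$.

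Finally, $L$ is lower triangular with every diagonal entry $\binom ii=1$, so $\det L=\det L^{\mathsf T}=1$, and therefore
\[
|x'_{i+j}|_{0\le i,j\le n}=\det(L)\,\det(H)\,\det(L^{\mathsf T})=\det H=|x_{i+j}|_{0\le i,j\le n},
\]
which is exactly \eqref{2.2}. The whole argument rests on the factorization in the middle step; the only place demanding a little care is the matching of summation ranges, since the Vandermonde identity writes $k=a+b$ while the double sum $\sum_{a=0}^{i}\sum_{b=0}^{j}$ already discards the vanishing binomials with $a>i$ or $b>j$, so no spurious terms are introduced. Beyond spotting this Pascal-matrix conjugation I do not anticipate any genuine obstacle.
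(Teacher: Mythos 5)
Your proposal is correct: the Vandermonde convolution gives exactly $[x'_{i+j}]_{0\le i,j\le n}=L\,[x_{i+j}]_{0\le i,j\le n}\,L^{\mathsf T}$ with $L=[\binom{i}{a}]$ unipotent, and the identity follows from multiplicativity of the determinant. The paper offers no proof of its own here---it simply cites Layman \cite{La01}---and your Pascal-matrix factorization is precisely the standard argument underlying Layman's Theorem 1, so your writeup matches the intended proof in all essentials.
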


Let $A=[a_{n,k}]_{n,k\ge 0}$ be a matrix of real
numbers. It is called {\it totally positive} ({\it TP} for short) if
all its minors are nonnegative. Total positivity of matrices plays
an important role in various branches of mathematics such as statistics,
probability, mechanics, economics, and computer science (see, e.g.,
\cite{Pin10,Kar68}).  A sequence $(a_k)_{k\ge
0}$ of numbers is called a {\it Stieltjes moment} sequence if its
Hankel matrix $H$ is TP. It is well known that $(a_k)_{k\ge0}$ is a
Stieltjes moment sequence if and only if we can write a general term in the form
\begin{equation}\label{i-e}
a_k=\int_0^{+\infty}x^kd\mu(x),
\end{equation}
where $\mu$ is a non-negative measure on $[0,+\infty)$ (see, e.g.,
\cite[Theorem 4.4]{Pin10}). To determine whether a sequence has the Stieltjes moment property or not is
one of classical moment problems and it arises naturally in many
branches of mathematics (cf. \cite{ST43,Wi41}). There are many transformations and convolutions of sequences preserving
Stieltjes moment sequences, see, e.g.,  \cite{WZ}. We need the following lemma in this direction.

\begin{lemma}\cite{WZ}\label{lem+transformation}
If both $(x_n)_{n\geq 0}$ and $(y_n)_{n\geq 0}$ are Stieltjes moment
sequences, then so is the sequence $(w_n)_{n\ge0}$, where
$$w_n:=\sum_{k=0}^{n}\binom{n}{k}^2x_ky_{n-k}.$$
\end{lemma}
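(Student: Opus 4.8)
The plan is to exploit the integral characterization \eqref{i-e} of Stieltjes moment sequences and reduce everything to a single non-negative measure. Write $x_k=\int_0^\infty s^k\,d\mu(s)$ and $y_k=\int_0^\infty t^k\,d\nu(t)$ for non-negative measures $\mu,\nu$ on $[0,\infty)$. Substituting into the definition of $w_n$ and interchanging the finite sum with the integrals gives
$$w_n=\int_0^\infty\int_0^\infty\left(\sum_{k=0}^n\binom nk^2 s^k t^{n-k}\right)d\mu(s)\,d\nu(t),$$
so the heart of the matter is to understand the homogeneous polynomial $F_n(s,t):=\sum_{k=0}^n\binom nk^2 s^k t^{n-k}$ and, for each fixed $s,t\ge0$, to show that the sequence $(F_n(s,t))_{n\ge0}$ is itself a Stieltjes moment sequence.

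First I would establish the key integral identity
$$F_n(s,t)=\frac1\pi\int_0^\pi\bigl(s+t+2\sqrt{st}\,\cos\theta\bigr)^n\,d\theta\qquad(s,t\ge0).$$
To prove it, write $s=a^2$, $t=b^2$ and note that $s+t+2\sqrt{st}\cos\theta=|a+be^{i\theta}|^2=(a+be^{i\theta})(a+be^{-i\theta})$; expanding both factors by the binomial theorem, multiplying, and using $\frac1{2\pi}\int_{-\pi}^{\pi}e^{i(k-j)\theta}\,d\theta=\delta_{k,j}$ collapses the double sum to $\sum_k\binom nk^2 s^{n-k}t^k$, which equals $F_n(s,t)$ since $\binom nk=\binom n{n-k}$. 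The passage from $\frac1{2\pi}\int_{-\pi}^\pi$ to $\frac1\pi\int_0^\pi$ is justified because the integrand is even in $\theta$.

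The payoff is that the base $s+t+2\sqrt{st}\cos\theta\ge(\sqrt s-\sqrt t)^2\ge0$ is non-negative on $[0,\pi]$, so pushing the measure $\frac1\pi\,d\theta$ forward under $\theta\mapsto s+t+2\sqrt{st}\cos\theta$ produces a non-negative measure $\sigma_{s,t}$ supported in $[0,\infty)$ with $F_n(s,t)=\int_0^\infty u^n\,d\sigma_{s,t}(u)$. Inserting this and applying Tonelli's theorem (all integrands are non-negative, and each $w_n$ is a finite sum of finite quantities) yields
$$w_n=\int_0^\infty u^n\,d\rho(u),\qquad \rho(E):=\int_0^\infty\int_0^\infty\sigma_{s,t}(E)\,d\mu(s)\,d\nu(t),$$
where $\rho$ is a non-negative measure on $[0,\infty)$. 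By \eqref{i-e} this exhibits $(w_n)_{n\ge0}$ as a Stieltjes moment sequence, which is exactly the claim.

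I expect the one genuinely non-routine step to be the discovery and verification of the trigonometric integral representation of $F_n(s,t)$; once it is in hand, the remaining work is bookkeeping with non-negative measures and Fubini--Tonelli. An alternative would be to invoke the identity $F_n(s,t)=(s-t)^n L_n\!\left(\tfrac{s+t}{s-t}\right)$, where $L_n$ is the $n$-th Legendre polynomial, but the cosine integral is preferable because it is manifestly symmetric in $s$ and $t$ and avoids the sign and branch issues that arise when $s<t$.
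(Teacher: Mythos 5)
Your proof is correct. Note, however, that the paper does not prove this lemma at all: it is quoted verbatim from the reference [WZ] (Wang and Zhu, \emph{Adv. in Appl. Math.} 81 (2016)), so there is no in-paper argument to compare against; what you have done is supply the missing proof, and your route is essentially the classical one (and the one underlying the cited source). The key identity $\sum_{k=0}^n\binom nk^2 s^k t^{n-k}=\frac1\pi\int_0^\pi\bigl(s+t+2\sqrt{st}\,\cos\theta\bigr)^n\,d\theta$ is exactly Laplace's integral representation of the Legendre polynomials in disguise, via $F_n(s,t)=(t-s)^nP_n\bigl(\frac{t+s}{t-s}\bigr)$ and $P_n(z)=\frac1\pi\int_0^\pi(z+\sqrt{z^2-1}\cos\theta)^n\,d\theta$; your direct verification through $|a+be^{i\theta}|^{2n}$ and orthogonality of $e^{ik\theta}$ is clean and avoids the branch issues you rightly flag. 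The remaining steps are sound: the base $s+t+2\sqrt{st}\cos\theta\ge(\sqrt s-\sqrt t)^2\ge0$ makes the pushforward $\sigma_{s,t}$ a (probability) measure on $[0,\infty)$, the mixture $\rho$ is a genuine nonnegative finite measure since $\mu,\nu$ have mass $x_0,y_0<\infty$, Tonelli applies because everything is nonnegative, and measurability of $(s,t)\mapsto\sigma_{s,t}(E)$ follows routinely from joint continuity of $(\theta,s,t)\mapsto s+t+2\sqrt{st}\cos\theta$ (or one can define $\rho$ via the Riesz representation applied to $f\mapsto\frac1\pi\iiint f(s+t+2\sqrt{st}\cos\theta)\,d\theta\,d\mu\,d\nu$). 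You also correctly use the integral characterization (1.3) of Stieltjes moment sequences that the paper itself records, so the proof closes the loop with the paper's definitions. The one cosmetic point: your expansion initially produces $\sum_k\binom nk^2 s^{n-k}t^k$, and the appeal to $\binom nk=\binom n{n-k}$ to identify this with $F_n(s,t)$ is exactly right.
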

The next result plays an important role in our proof.
\begin{lemma}\label{lem+odd} Let $k$ be a positive integer. Suppose that
$(x_i)_{i\geq0}$ is an integer sequence for which $x_0=1$, $2k\mid x_i$ for all $i\geq1$, and
$4k\mid x_i$ if and only if $i$ is not a power of two.  Then, for any $n\in\N$,
the number $(2k)^{-n}|x_{i+j}|_{0\leq i,j\leq n}$ is
an odd integer.
\end{lemma}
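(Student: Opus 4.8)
The plan is to strip off the prescribed power of $2k$ by elementary row operations, reduce the remaining determinant modulo $2$, and then evaluate the resulting $0$--$1$ determinant over $\F_2$ by showing that an associated permutation is unique.

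First I would peel off $(2k)^n$. Since $2k\mid x_i$ for every $i\ge1$, every entry $x_{i+j}$ of $[x_{i+j}]_{0\le i,j\le n}$ with $i\ge1$ is divisible by $2k$, so factoring $2k$ out of each of the rows indexed $1,\ldots,n$ gives
\begin{equation*}
|x_{i+j}|_{0\le i,j\le n}=(2k)^n\det M',
\end{equation*}
where $M'$ has first row $(1,x_1,\ldots,x_n)$ and $(i,j)$-entry $x_{i+j}/(2k)$ for $i\ge1$. All entries of $M'$ are integers, so it remains only to prove that $\det M'$ is odd. Reducing modulo $2$: as $x_i=2k\cdot(x_i/(2k))$ is even for $i\ge1$, the first row of $M'$ becomes $(1,0,\ldots,0)$ over $\F_2$, whence $\det M'\equiv\det B_n\pmod 2$ with $B_n=[\,x_{i+j}/(2k)\bmod 2\,]_{1\le i,j\le n}$. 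The hypothesis that $4k\mid x_m$ exactly when $m$ is not a power of $2$ says precisely that $x_m/(2k)$ is odd iff $m$ is a power of $2$; hence $B_n$ is the $\F_2$-matrix whose $(i,j)$-entry is $1$ if $i+j$ is a power of $2$ and $0$ otherwise. Thus the lemma reduces to the single assertion $\det B_n\equiv1\pmod 2$.

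The heart of the argument, and the step I expect to be the main obstacle, is this evaluation over $\F_2$. Over $\F_2$ the determinant equals the permanent, so $\det B_n$ is the parity of the number $N_n$ of permutations $\sigma\in S_n$ with $i+\sigma(i)$ a power of $2$ for every $i$. I would prove the stronger fact $N_n=1$ by a self-similar reduction. Let $2^s$ be the least power of $2$ exceeding $n$, so $2^{s-1}\le n<2^s\le 2n$. For any admissible $\sigma$ and any $i$ with $2^{s-1}<i\le n$ one has $2^{s-1}<i+\sigma(i)\le 2n<2^{s+1}$, which forces $i+\sigma(i)=2^s$, i.e.\ $\sigma(i)=2^s-i$. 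Since the condition is symmetric, $\sigma^{-1}$ is admissible too, so the same applies to it; together these force $\sigma$ to interchange $i\leftrightarrow 2^s-i$ on $\{2^{s-1}+1,\ldots,n\}\cup\{2^s-n,\ldots,2^{s-1}-1\}$, while a short check forces $\sigma(2^{s-1})=2^{s-1}$. Consequently $\sigma$ restricts to an admissible permutation of the interval $\{1,\ldots,2^s-n-1\}$, and this restriction is a bijection between the two sets of admissible permutations, so $N_n=N_{2^s-n-1}$ with $2^s-n-1<n$. As $N_0=1$, induction yields $N_n=1$ for all $n$, whence $\det B_n\equiv1\pmod 2$.

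Combining the three steps, $\det M'$ is odd, and therefore $(2k)^{-n}|x_{i+j}|_{0\le i,j\le n}=\det M'$ is an odd integer. The only delicate point is the forcing argument above: its content is that the admissibility condition rigidly pins down $\sigma$ on the top part of the index set, and that the symmetric self-similar structure of the powers of $2$ then peels the problem down to a strictly shorter initial interval, giving exactly one admissible permutation at every stage.
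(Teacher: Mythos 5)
Your proof is correct, and while it shares the paper's overall skeleton---factor $(2k)^n$ out, reduce mod $2$ to the $(0,1)$-matrix $B_n$ whose $(i,j)$-entry records whether $i+j$ is a power of two, then run an induction---your evaluation of $\det B_n$ is genuinely different in method. The paper proves $|B_n|\in\{\pm1\}$ by explicit row and column operations: it writes $n=2^m+t$, clears the spurious $1$'s by subtracting suitable rows and columns, and splits off an anti-diagonal block of order $2t+1$, with separate special cases when $n$ or $n+1$ is a power of two. You instead pass to the permanent over $\F_2$ and show the \emph{stronger} fact that there is exactly one permutation $\sigma$ with every $i+\sigma(i)$ a power of two; your forcing argument (all $i\in[2^s-n,\,n]$ must satisfy $\sigma(i)=2^s-i$, using admissibility of both $\sigma$ and $\sigma^{-1}$, with $\sigma(2^{s-1})=2^{s-1}$ forced) is sound, and I checked the endpoint arithmetic: $2n<2^{s+1}$ pins the sum to $2^s$, the involution $i\mapsto 2^s-i$ preserves $[2^s-n,n]$, and $n'=2^s-n-1<n$ with $N_0=1$ closes the induction. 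Notably your recursion is the same self-similar reduction as the paper's in disguise---with $n=2^m+t$ one has $n-2t-1=2^{m+1}-n-1$---but your version handles all cases uniformly (the paper's special cases become $n'=0$ and $n'=n-1$), dispenses with the large matrix diagrams, and since the unique admissible permutation is the only nonvanishing term in the determinant expansion, it even recovers the paper's exact claim $|B_n|\in\{\pm1\}$ over $\Z$, with the sign given by the sign of that permutation.
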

\begin{proof} Since $2k\mid x_m$ for all $m=1,2,3,\ldots$, by the Laplace expansion of $|x_{i+j}|_{0\leq i,j\leq n}$ according to the first row, it suffices to
show that
\begin{eqnarray*}
|x_{i+j}/(2k)|_{1\leq i,j\leq n}\equiv1 \pmod 2.
\end{eqnarray*}
For any positive integer $m$, clearly $x_m/(2k)$ is congruent to $1$ or $0$ modulo 2 according as $m$ is a power of two or not.
Let $B_n$ denote the $(0,1)$-matrix $[x_{i+j}/(2k) \pmod 2]_{1\leq i,j\leq n}$. It suffices to show the claim that
\begin{equation}\label{pm}|B_{n}|\in\{\pm1\}.\end{equation}

(\ref{pm}) is trivial for $n=1,2,3$. Below we let $n\ge4$.
If $n+1$ is a power of two, then
the matrix $B_{n}$ is an upper triangular matrix with the anti-diagonal line containing no $0$, and thus
$|B_{n}|=(-1)^{\binom{n}{2}}=-1$.
If $n$ is a power of two, then
$$B_{n}=\left[
   \begin{array}{ccccccc}
      &  &  &  &  &  1 & 0\\
      &  &  &  &  1  & 0 & 0\\
      &  &  & \iddots & \vdots &\vdots & \vdots \\
      0 & 1  &  & \cdots & 0 & 0 & 0\\
      1 & 0  &  & \cdots & 0 & 0 & 0\\
      0 & 0  &  & \cdots & 0 & 0 & 1 \\
   \end{array}
 \right]$$
and $|B_{n}|\in\{\pm1\}$.

Now we suppose that $n=2^m+t$ for some $t=1,\ldots,2^m-2$. Note that $2t+2\le 2^m+t=n$ and $n-2t\le 2^m-1<n-t$.
For any $i,j\in\{1,\ldots,n\}$ with $i+j\ge2^m$, clearly $i+j$ is a power of two if and only if
the ordered pair $(i,j)$ is among
$$(2^m-r,r)\ (r=1,\ldots,2^m-1)\ \mbox{and}\ (n-s,n-2t+s)\ (s=0,\ldots,2t).$$
Therefore $B_{n}$ has the following form with the upper-left-most submatrix of order $n-2t-1$ identical with $B_{n-2t-1}$.
\[
\begin{array}{c@{\hspace{-5pt}}l}
\left[\begin{array}{cccccc;{2pt/2pt}ccccccc}
& & & & & & & & 1 & & & \\
& & & & & & & \iddots & & & \\
& & & & & & 1 & & & & & \\
& & & & & 1 & & & & \multicolumn{2}{c}{\multirow{2}*{{\Huge0}}} \\
& & & & \iddots & & & & & & \\
& & & 1 & & & & & & & &\\
\hdashline[2pt/2pt]
& & 1 & & & & & & & & & & 1\\
& \iddots & & & & & & \multicolumn{2}{c}{\multirow{2}*{{\Huge0}}} & &  & \iddots\\
1 & & & & &  & & & &  &  1 & &\\
\hdashline[2pt/2pt]
& & & & & &  & &  & 1 & & &\\
\hdashline[2pt/2pt]
& & \multicolumn{2}{c}{\multirow{2}*{{\Huge0}}} & & &  & & 1 & & \multicolumn{2}{c}{\multirow{2}*{{\Huge0}}} & \\
& & & & & & & \iddots & & & &\\
& & & & & & 1 & & & & &\\
\end{array}\right]
&\begin{array}{l}
\left.\rule{0mm}{20mm}\right\}{n-2t-1}\\
\left.\rule{0mm}{10mm}\right\}{t}\\
\left.\rule{0mm}{2mm}\right\}{\mbox{row}\ 2^m}\\
\left.\rule{0mm}{10mm}\right\}{t}\\
\end{array}\\[-5pt]
\begin{array}{cc}
\underbrace{\rule{38mm}{0mm}}_{n-2t-1}&
\underbrace{\rule{43mm}{0mm}}_{2t+1}\end{array}&
\end{array}.
\]

For $1\le i\le t$ and $n-2t\le j\le 2^m-1$ with $i+j\le 2^m$, clearly $1\le 2^m-j\le 2^m-n+2t=t$, if the $(i,j)$-entry is $1$ then
we let row $i$ subtract row $2^{m+1}-j=2^m+(2^m-j)$ to make the $(i,j)$-entry become $0$.
Similarly, for $n-2t\le i<2^m=n-t$ and $1\le j\le t$ with $i+j\le 2^m$, we let column $j$ subtract column $2^{m+1}-i=(n-t)+2^m-i$ to make the $(i,j)$-entry become $0$.
Thus the determinant of $B_{n}$ is equal to that
of the next matrix:

\[
\begin{array}{c@{\hspace{-5pt}}l}
\left[\begin{array}{cccccc;{2pt/2pt}ccccccc}
& & & & & & & &  & & & \\
& & & & & & & & & & \\
& & & & & &  & & & & & \\
& & & & & 1 & & & & \multicolumn{2}{c}{\multirow{2}*{{\Huge0}}} \\
& & & & \iddots & & & & & & \\
& & & 1 & & & & & & & &\\
\hdashline[2pt/2pt]
& &  & & & & & & & & & & 1\\
& & & & & & & \multicolumn{2}{c}{\multirow{2}*{{\Huge0}}} & &  & \iddots\\
& & & & &  & & & &  &  1 & &\\
& & \multicolumn{2}{c}{\multirow{2}*{{\Huge0}}} & &  & &  & & 1 & & &\\
& & & & & &  & & 1 & & \multicolumn{2}{c}{\multirow{2}*{{\Huge0}}} & \\
& & & & & & & \iddots & & & &\\
& & & & & & 1 & & & & &\\
\end{array}\right]
&\begin{array}{l}
\left.\rule{0mm}{18mm}\right\}{n-2t-1}\\
\left.\rule{0mm}{22mm}\right\}{2t+1}\\
\end{array}\\[-5pt]
\begin{array}{cc}
\underbrace{\rule{30mm}{0mm}}_{n-2t-1}&
\underbrace{\rule{45mm}{0mm}}_{2t+1}\end{array}&
\end{array}.
\]
It follows that $$|B_{n}|=|B_{n-2t-1}|\times
\begin{vmatrix}
 &  &  &  &  &  1 \\
 &  \multicolumn{2}{c}{\multirow{2}*{{\Huge0}}} &  &  1  & \\
 &  &  & \iddots &  & \\
 &  1  &  &  \multicolumn{2}{c}{\multirow{2}*{{\Huge0}}}  &  \\
 1 &  &  &  &  &  \\
\end{vmatrix}_{(2t+1)\times(2t+1)}.$$
So
$$|B_{n-2t-1}|\in\{\pm1\}\Rightarrow |B_n|\in\{\pm1\}.$$
Therefore (\ref{pm}) holds by induction, and this concludes our proof of Lemma \ref{lem+odd}.
\end{proof}

We also need a result of N. J. Calkin \cite{C98}.
\begin{lemma}
\cite[Lemma 12]{C98}\label{lem+mul} For any positive integers $r$ and $n$,
we have $2^{\ell(n)}\mid f_n^{(r)}$, where $\ell(n)$ denotes the numbers of $1$'s in
the binary expansion of $n$.
\end{lemma}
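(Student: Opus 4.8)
The plan is to reduce everything to Lucas' theorem together with an induction on $\ell(n)$ that peels off the top binary digit of $n$. First I would record the two standard consequences of Lucas' and Legendre's theorems: writing $n=\sum_{i\in S}2^i$ with $|S|=\ell(n)$, the coefficient $\binom nk$ is odd precisely when $k$ is a bitwise submask of $n$, so exactly $2^{\ell(n)}$ of the $\binom nk$ are odd; and in general $v_2\binom nk=\ell(k)+\ell(n-k)-\ell(n)$, where $v_2$ denotes the $2$-adic valuation. The base cases $\ell(n)\in\{0,1\}$ are immediate: for $n=2^a$ the involution $k\mapsto 2^a-k$ pairs equal terms and leaves the even central term $\binom{2^a}{2^{a-1}}^r$, so $2\mid f_{2^a}^{(r)}$.

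For the inductive step, write $n=2^a+m$ with $2^a$ the leading power of two, $m<2^a$, and $\ell(m)=\ell(n)-1$. Using the symmetry $\binom nk=\binom n{n-k}$ I would fold the sum across the top bit to obtain
\[
f_n^{(r)}=2\sum_{k=0}^{m}\binom nk^{r}+\sum_{k=m+1}^{2^a-1}\binom nk^{r},
\]
and observe, via Lucas, that every index $k$ in the second (``error'') sum fails to be a submask of $n$, so each such $\binom nk$ is even. For the first sum I would expand $\binom{2^a+m}{k}$ by Vandermonde: since $\binom{2^a}{j}$ is even for $0<j<2^a$, one gets $\binom{2^a+m}{k}=\binom mk+\Delta_k$ with $v_2(\Delta_k)\ge1$ for $0\le k\le m$. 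Binomially expanding $(\binom mk+\Delta_k)^r$ then yields $\sum_{k=0}^m\binom nk^r=f_m^{(r)}+(\text{corrections of positive valuation})$, and the inductive hypothesis supplies $2^{\ell(n)-1}\mid f_m^{(r)}$.

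The crux --- and the step I expect to be the main obstacle --- is showing that the two ``extra'' pieces, namely the Vandermonde corrections in the first sum and the entire error sum $\sum_{m<k<2^a}\binom nk^{r}$, are themselves divisible by the requisite power of two. Term-by-term valuation bounds are too weak here: when $r$ is small relative to $\ell(n)$, an individual even binomial contributes only $v_2\ge r$, so one cannot reach $2^{\ell(n)}$ without genuine cancellation among the terms. I would attack this by strengthening the inductive statement so that it also controls the relevant submask-restricted partial sums modulo $2^{\ell(n)}$, or equivalently by establishing an auxiliary congruence of the form $\sum_{k\le m}\binom{2^a+m}{k}^r\equiv f_m^{(r)}\pmod{2^{\ell(n)}}$; grouping indices by their carry pattern in $k+(n-k)=n$ and pairing within each group is the natural mechanism to expose these cancellations.

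As a second, independent route I would keep in reserve the constant-term representation $f_n^{(r)}=\mathrm{CT}_{x_1,\dots,x_{r-1}}\big[\prod_{i=1}^{r-1}(1+x_i)^n\,(1+(x_1\cdots x_{r-1})^{-1})^n\big]$ together with the factorization $(1+x)^n\equiv\prod_{i\in S}(1+x^{2^i})\pmod2$; lifting this sparse mod-$2$ structure through successive powers of two by Hensel-type bookkeeping gives an alternative, more computational path to the bound $v_2(f_n^{(r)})\ge\ell(n)$ should the inductive cancellation prove unwieldy.
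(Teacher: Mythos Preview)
The paper does not supply a proof of this lemma: it is quoted verbatim from Calkin \cite[Lemma~12]{C98} and invoked as a black box in the proof of Theorem~\ref{Th1.1}. There is therefore no ``paper's own proof'' to compare your argument against; the intended reference is Calkin's original article.

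As to your proposal itself, it is a plan rather than a proof, and the gap you yourself flag is genuine and unresolved. Your fold $f_n^{(r)}=2\sum_{k\le m}\binom nk^r+\sum_{m<k<2^a}\binom nk^r$ correctly isolates a piece to which the inductive hypothesis on $f_m^{(r)}$ can be applied after the Vandermonde substitution $\binom{n}{k}=\binom{m}{k}+\Delta_k$, but the leftover contributions --- both the cross-terms coming from $\Delta_k$ and the entire middle block $\sum_{m<k<2^a}\binom nk^r$ --- are only controlled termwise at valuation $\ge r$ (from a single even binomial factor raised to the $r$th power), whereas you need valuation $\ge\ell(n)-1$ on the whole. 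When $\ell(n)>r$ this shortfall cannot be repaired without exhibiting actual cancellation among the terms, and neither your ``group by carry pattern'' suggestion nor the constant-term/Hensel route is carried beyond a heuristic. Until one of these is executed in full --- for instance by proving and then iterating a congruence of the shape $f_{2^a+m}^{(r)}\equiv f_{2^a}^{(r)}\,f_m^{(r)}\pmod{2^{\ell(n)}}$ for $m<2^a$, which would combine with $2\mid f_{2^a}^{(r)}$ and the inductive bound on $f_m^{(r)}$ to give the claim --- the induction does not close. You should either complete that step or, as the paper does, cite Calkin directly.
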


\medskip
\noindent
{\it Proof of Theorem \ref{Th1.1}}. For any positive integer $m$, by Lemma \ref{lem+mul} we have
$2\mid f_m^{(r)}$, and $4\mid f_m^{(r)}$ if $m$ is not a power of two.
When $m$ is a power of two, we have
$$\bi mk=\f mk\bi{m-1}{k-1}\eq0\pmod2\quad\mbox{for all}\ k=1,\ldots,m-1,$$
hence
$$f_m^{(r)}\eq\bi m0^r+\bi mm^r=2\pmod{2^r}$$
and thus $4\nmid f_m^{(r)}$. Applying Lemma \ref{lem+odd} to the sequence $(f_m^{(r)})_{m\ge0}$, we see that
the number $2^{-n}|f^{(r)}_{i+j}|_{0\leq i,j\leq n}$ is always an odd integer.

 By Barrucand's identity (cf. \cite{B}), for any $m\in\N$,
 $$f_m':=\sum_{k=0}^m\binom{m}{k}f_k$$
coincides with
$$g_m:=\sum_{k=0}^m\binom{m}{k}^2\binom{2k}{k}.$$
(See also \cite[A002893]{Sl}, and \cite{S16} for an extension.)
Moreover, by Lemma \ref{lem+transformation}, $(g_m)_{m\ge0}$ is a Stieltjes moment sequence since
$(\binom{2m}{m})_{m\geq0}$ is a Stieltjes moment sequence (cf.
\cite{CK}).
Combining this with Lemma~\ref{le+lay}, we have
$$|f_{i+j}|_{0\leq i,j\leq n}=|g_{i+j}|_{0\leq i,j\leq n}\ge0.$$
By Fermat's little theorem, $a^3\eq a\pmod 3$ for any $a\in\Z$. Thus, for any positive integer $m$ we have
$$g_m=\sum_{k=0}^m\bi mk f_k\eq\sum_{k=0}^m\bi mk\sum_{j=0}^k\bi kj=\sum_{k=0}^m\bi mk2^k=3^m\eq0\pmod 3.$$
So $|g_{i+j}|_{0\leq i,j\leq n}$ is divisibly by $3^n$.

In view of the above, $6^{-n}|f_{i+j}|_{0\leq i,j\leq n}$ is always a positive odd
integer, as desired. \qed

\section{Proof of Theorem \ref{Th1.2}}
\setcounter{lemma}{0} \setcounter{theorem}{0}
\setcounter{corollary}{0} \setcounter{remark}{0}
\setcounter{equation}{0} \setcounter{conjecture}{0}


\begin{lemma}\label{lem+center binom} Let $m$ and $n$ be positive integers, and set
$$D^{(m)}_n:=\sum_{k=0}^n\binom{n}{k}^m\binom{2k}{k}\binom{2(n-k)}{n-k}.$$
Then $4\mid D^{(m)}_n$. Also, $8\mid D^{(m)}_n$ if and only if $n$ is not a power of two.
\end{lemma}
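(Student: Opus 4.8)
The plan is to work $2$-adically and determine $v_2\big(D^{(m)}_n\big)$ (or at least a sharp lower bound for it) in each case, where $v_2$ denotes the $2$-adic valuation and $\ell(\cdot)$ is the binary digit-sum as in Lemma~\ref{lem+mul}. Write $T_k:=\binom nk^m\binom{2k}k\binom{2(n-k)}{n-k}$ for the $k$-th summand. The first observation is the symmetry $T_k=T_{n-k}$, immediate from $\binom nk=\binom n{n-k}$ together with the simultaneous swap of the two central binomial factors; this is what will let me pair up the dominant terms.

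Next I would compute the valuation of each summand. By Kummer's theorem $v_2\binom{2j}j=\ell(j)$, and by Legendre's formula $v_2\binom nk=\ell(k)+\ell(n-k)-\ell(n)$, so that $\ell(k)+\ell(n-k)=\ell(n)+v_2\binom nk$. Combining these gives
\begin{equation*}
v_2(T_k)=m\,v_2\binom nk+\ell(k)+\ell(n-k)=\ell(n)+(m+1)\,v_2\binom nk .
\end{equation*}
Hence every summand satisfies $v_2(T_k)\ge\ell(n)$, with equality precisely when $\binom nk$ is odd, i.e. when $k$ is a binary submask of $n$; all remaining summands satisfy $v_2(T_k)\ge\ell(n)+(m+1)\ge\ell(n)+2$ since $m\ge1$.

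From here the result follows by a short case analysis on $\ell(n)$. If $n$ is a power of two ($\ell(n)=1$), the only submasks are $0$ and $n$, so the two minimal summands contribute $T_0+T_n=2T_0=2\binom{2n}n$ with $v_2=\ell(n)+1=2$, while every other summand has $v_2\ge3$; thus $v_2\big(D^{(m)}_n\big)=2$, giving $4\mid D^{(m)}_n$ but $8\nmid D^{(m)}_n$. If $\ell(n)\ge3$, then $v_2(T_k)\ge\ell(n)\ge3$ for every $k$, so $8\mid D^{(m)}_n$. In the remaining case $\ell(n)=2$, say $n=2^a+2^b$, there are exactly four submasks $0,2^b,2^a,n$, which the symmetry groups as $T_0+T_n=2T_0$ and $T_{2^b}+T_{2^a}=2T_{2^b}$; here $v_2(T_0)=\ell(n)=2$ and $v_2(T_{2^b})=2$, so their doubled sum has $v_2\ge3$, while all other summands have $v_2\ge4$, whence $8\mid D^{(m)}_n$. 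Collecting the three cases yields both assertions (note $4\mid D^{(m)}_n$ is automatic once $\ell(n)\ge2$).

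The only delicate point — the main obstacle — is the power-of-two case, where I must rule out accidental cancellation and pin $v_2\big(D^{(m)}_n\big)$ down to exactly $2$ rather than merely $\ge2$. This is precisely where the symmetry pairing is essential: it shows the two minimal terms coalesce into $2\binom{2n}n$, whose valuation is forced to be $\ell(n)+1=2$ (using $v_2\binom{2n}n=\ell(n)$), strictly below the valuation $\ge3$ of every other term, so no cancellation can lower it. In the non-power-of-two cases only a lower bound on $v_2$ is required, so the argument there is routine once the valuation formula above is established.
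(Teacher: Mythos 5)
Your argument is correct, and it takes a genuinely different route from the paper. You compute the exact $2$-adic valuation of every summand, $v_2(T_k)=\ell(n)+(m+1)v_2\binom{n}{k}$ (via Kummer and Legendre), and then run a trichotomy on $\ell(n)$, using the symmetry $T_k=T_{n-k}$ to gain one extra factor of $2$ among the minimal-valuation (submask) terms when $\ell(n)\le2$. The paper uses the same symmetry pairing but only once and globally, at the level of a single congruence: after pairing, every term with $1\le k\le\lfloor(n-1)/2\rfloor$ carries two even central binomial coefficients and so dies modulo $8$, the middle term $k=n/2$ (if present) equals $2^{m+2}\binom{2k-1}{k-1}^{m+2}\equiv0\pmod 8$, and what survives is $D^{(m)}_n\equiv4\binom{2n-1}{n-1}\pmod 8$; the lemma then reduces to showing that $\binom{2n-1}{n-1}$ is odd precisely when $n$ is a power of two, which the paper does by Lucas' congruence applied to the binary digits of $n-1$. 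So the paper never needs per-term valuations or a case split on $\ell(n)$, while you never need Lucas (Kummer plays that role). Your bookkeeping buys strictly more than the statement asks: it yields $v_2\bigl(D^{(m)}_n\bigr)\ge\ell(n)$ for all $n$ — a Calkin-type divisibility in the spirit of Lemma 2.4, invisible to the paper's mod-$8$ reduction once $\ell(n)\ge4$ — together with the exact value $v_2\bigl(D^{(m)}_n\bigr)=2$ when $n$ is a power of two. Two small points that your write-up leaves implicit but that do check out: in the $\ell(n)=2$ case the pairing of the four submask terms has no fixed point, since $n/2\in\{2^a,2^b\}$ would force $\ell(n)=1$; and the middle term $k=n/2$, when it exists, is correctly absorbed into the non-submask bucket because $v_2\binom{n}{n/2}=2\ell(n/2)-\ell(n)=\ell(n)>0$ for even $n$.
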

\Proof. For each $k=1,2,3,\ldots$ we obviously have
$$\binom{2k}{k}=2\binom{2k-1}{k-1}\eq0\pmod2.$$
Thus
\begin{align*}&\sum^n_{k=0\atop 2k\not=n}\bi nk^m\bi{2k}k\bi{2(n-k)}{n-k}
\\=&\sum_{k=0}^{\lfloor(n-1)/2\rfloor}\l(\bi nk^m+\bi{n}{n-k}^m\r)\bi{2k}k\bi{2(n-k)}{n-k}
\\=&2\sum_{k=0}^{\lfloor(n-1)/2\rfloor}\bi nk^m\bi{2k}k\bi{2(n-k)}{n-k}
\\\eq&2\bi n0^m\bi{2\times0}0\bi{2n}n=4\bi{2n-1}{n-1}\pmod 8.
\end{align*}
If $n=2k$ for some positive integer $k$, then
$$\bi nk^m\bi{2k}k\bi{2(n-k)}{n-k}=\bi{2k}k^{m+2}=2^{m+2}\bi{2k-1}{k-1}^{m+2}\eq0\pmod 8.$$
So we always have
$$D^{(m)}_n\eq4\bi{2n-1}{n-1}\pmod 8.$$

Now we show that $\bi{2n-1}{n-1}$ is odd if and only if $n$ is a power of two.
Clearly, $\bi{2-1}{1-1}=1$ is odd. For $n>1$, we can write $n-1$ as $\sum_{i=0}^k\da_i2^i$ with $\da_0,\ldots,\da_k\in\{0,1\}$ and $\da_k=1$,
hence
\begin{align*}\bi{2n-1}{n-1}=&\bi{\da_k\times2^{k+1}+\da_{k-1}\times2^k+\ldots+\da_02+1}{\da_k2^k+\ldots+\da_12+\da_0}
\\\eq&\bi{\da_k}0\bi{\da_{k-1}}{\da_k}\cdots\bi{\da_0}{\da_1}\bi1{\da_0}\pmod2
\end{align*}
by Lucas' congruence (cf. \cite{HS}), and thus
$$\bi{2n-1}{n-1}\eq1\pmod2\iff \da_0=\da_1=\ldots=\da_k=1\iff n \ \mbox{is a power of two}.$$

Combining the above, we immediately obtain the desired result. \qed

\begin{lemma}\cite[P\'olya
and Szeg\H o]{PS64}\label{lem+PSzeg} If both $(x_n)_{n\geq 0}$ and
$(y_n)_{n\geq 0}$ are Stieltjes moment sequences, then so is the
sequence $(z_n)_{n\ge0}$, where 
$$z_n=\sum_{k=0}^{n}\binom{n}{k}x_ky_{n-k}.$$
\end{lemma}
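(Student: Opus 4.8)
The plan is to invoke the integral representation \eqref{i-e} characterizing Stieltjes moment sequences. Since $(x_n)_{n\ge0}$ and $(y_n)_{n\ge0}$ are Stieltjes moment sequences, there exist nonnegative measures $\mu$ and $\nu$ on $[0,+\infty)$ with
$$x_k=\int_0^{+\infty}s^k\,d\mu(s)\quad\text{and}\quad y_k=\int_0^{+\infty}t^k\,d\nu(t)\qquad(k\in\N).$$
First I would substitute these into the defining sum for $z_n$ and interchange the finite sum with the two integrals, which is legitimate because the integrands are nonnegative (Tonelli's theorem). This yields
$$z_n=\int_0^{+\infty}\int_0^{+\infty}\l(\sum_{k=0}^n\bi nk s^k t^{n-k}\r)d\mu(s)\,d\nu(t).$$

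Next I would apply the binomial theorem to collapse the inner sum, using $\sum_{k=0}^n\bi nk s^k t^{n-k}=(s+t)^n$, which gives
$$z_n=\int_0^{+\infty}\int_0^{+\infty}(s+t)^n\,d\mu(s)\,d\nu(t).$$
Because $s,t\in[0,+\infty)$ forces $s+t\in[0,+\infty)$, I would then let $\la$ be the pushforward of the product measure $\mu\times\nu$ under the addition map $(s,t)\mapsto s+t$. Then $\la$ is a nonnegative measure on $[0,+\infty)$, and the change-of-variables formula gives $z_n=\int_0^{+\infty}u^n\,d\la(u)$ for every $n\in\N$. By the criterion \eqref{i-e}, this exhibits $(z_n)_{n\ge0}$ as a Stieltjes moment sequence, completing the argument.

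The argument is short, and the only points requiring any care are the interchange of summation and integration (handled by nonnegativity) and the observation that the support of the pushforward measure stays inside $[0,+\infty)$; the latter is exactly what guarantees the resulting representation is of Stieltjes rather than merely Hamburger type. I expect no serious obstacle here, since the binomial theorem does all the real work by converting the binomial convolution into an ordinary additive convolution of the underlying representing measures on the half-line. This is also the natural template for the analogous but harder Lemma~\ref{lem+transformation}, whose squared weights $\bi nk^2$ prevent such a clean collapse and force a genuinely different device.
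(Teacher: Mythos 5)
Your proof is correct. The paper does not prove this lemma at all --- it is quoted directly from P\'olya and Szeg\H o \cite{PS64} --- and your argument (binomial theorem collapsing the convolution, then the pushforward of $\mu\times\nu$ under $(s,t)\mapsto s+t$) is exactly the classical proof of that cited result; the only cosmetic remark is that interchanging a \emph{finite} sum with the integrals needs only linearity, not Tonelli, and the product-measure step is legitimate since $\mu([0,+\infty))=x_0<\infty$ and $\nu([0,+\infty))=y_0<\infty$ make both representing measures finite.
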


\begin{lemma}\label{lem+Dom} We have $D_n\equiv1\pmod 3$ for all $n=0,1,2,\ldots$.
\end{lemma}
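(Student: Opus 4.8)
The plan is to work entirely modulo $3$ and to compare $D_n$ with the classical convolution identity
\[
\sum_{k=0}^n\binom{2k}k\binom{2(n-k)}{n-k}=4^n,
\]
which follows at once from squaring the generating function $\sum_{k\ge0}\binom{2k}kx^k=(1-4x)^{-1/2}$. Since $4^n\equiv1\pmod3$, it suffices to prove that $D_n\equiv4^n\pmod3$.

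By Fermat's little theorem $a^2\equiv1\pmod3$ when $3\nmid a$ and $a^2\equiv0\pmod3$ when $3\mid a$, so
\[
D_n-4^n=\sum_{k=0}^n\left(\binom nk^2-1\right)\binom{2k}k\binom{2(n-k)}{n-k}\equiv-\sum_{\substack{0\le k\le n\\ 3\mid\binom nk}}\binom{2k}k\binom{2(n-k)}{n-k}\pmod3.
\]
Everything then reduces to the key claim that whenever $3\mid\binom nk$ one also has $3\mid\binom{2k}k\binom{2(n-k)}{n-k}$; granting this, every surviving summand vanishes modulo $3$ and we obtain $D_n\equiv4^n\equiv1\pmod3$.

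To prove the key claim I would argue with base-$3$ digits via the theorems of Lucas and Kummer, and I expect this digit bookkeeping to be the main obstacle. Write $k=\sum_i k_i3^i$ and $n-k=\sum_i c_i3^i$ in base $3$. By Kummer's theorem $3\nmid\binom{2k}k$ exactly when doubling $k$ produces no carries in base $3$, that is when every $k_i\in\{0,1\}$ (and then each Lucas factor $\binom{2k_i}{k_i}\in\{1,2\}$ is nonzero modulo $3$); likewise $3\nmid\binom{2(n-k)}{n-k}$ exactly when every $c_i\in\{0,1\}$. The point is that if all $k_i\le1$ and all $c_i\le1$, then adding $k$ and $n-k$ in base $3$ produces no carries at all: arguing by induction on the digit position, the incoming carry is always $0$ and $k_i+c_i\le2<3$, so no carry is generated. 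Hence in that case $3\nmid\binom nk$ by Kummer. Taking the contrapositive, $3\mid\binom nk$ forces some $k_i=2$ or some $c_i=2$, which makes $3\mid\binom{2k}k$ or $3\mid\binom{2(n-k)}{n-k}$, establishing the key claim.

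The case $n=0$ and the comparison step are immediate, so the entire difficulty is concentrated in the carry-free doubling analysis of the previous paragraph; once that is in place the congruence $D_n\equiv1\pmod3$ follows for all $n\in\N$.
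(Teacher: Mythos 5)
Your proof is correct, and it takes a genuinely different route from the paper. The paper argues by induction on $n$: when $3\mid n$ it uses Lucas' theorem to reduce $D_n\equiv D_{n/3}\pmod 3$, and when $3\nmid n$ it invokes the recurrence $n^3D_n=2(2n-1)(5n^2-5n+2)D_{n-1}-64(n-1)^3D_{n-2}$ (quoted from OEIS A002895, obtainable via Zeilberger's algorithm) together with Fermat's little theorem to push the congruence forward. You instead give a direct, non-inductive argument: compare $D_n$ with the Catalan convolution $\sum_{k=0}^n\binom{2k}k\binom{2(n-k)}{n-k}=4^n$, note that $\binom nk^2\equiv 1$ or $0\pmod 3$ according as $3\nmid\binom nk$ or $3\mid\binom nk$, and then kill the surviving terms via the structural fact that $3\mid\binom nk$ forces $3\mid\binom{2k}k\binom{2(n-k)}{n-k}$. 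Your carry analysis for that fact is sound: by Kummer's theorem, $v_3\bigl(\binom{2k}k\bigr)$ counts carries in $k+k$ base $3$, so $3\nmid\binom{2k}k$ iff every base-$3$ digit of $k$ is at most $1$, and if all digits of both $k$ and $n-k$ are at most $1$ then $k+(n-k)$ is carry-free (the inductive no-incoming-carry argument is right), whence $3\nmid\binom nk$; the contrapositive is exactly what you need. What each approach buys: the paper's induction is short given the recurrence but rests on an unproved (machine-derivable) identity; yours is self-contained, isolates a clean combinatorial lemma about binomial coefficients mod $3$, and generalizes immediately --- since only the parity of the exponent on $\binom nk$ matters, the same argument gives $D_n^{(m)}\equiv 1\pmod 3$ for every even $m$, which the paper's recurrence-based proof cannot do.
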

\Proof. We use induction on $n$.

Clearly, both $D_0=1$ and $D_1=4$ are congruent to $1$ modulo $3$.

Now let $n>1$ be an integer and assume that $D_m\equiv1\pmod3$ for all $m=0,\ldots,n-1$.

{\it Case}\ 1. $3\mid n$.

In this case, by Lucas' theorem, for any $k=0,\ldots,n$ we have
$$\binom{n}{k}\equiv\begin{cases}\binom{n/3}{k/3}\pmod3&\mbox{if}\ 3\mid k,
\\0\pmod3&\mbox{if}\ 3\nmid k.\end{cases}$$
Thus
\begin{align*}D_n\equiv&\sum_{j=0}^{n/3}\binom{n/3}{j}^2\binom{6j}{3j}\binom{2(n-3j)}{n-3j}
\\\equiv&\sum_{j=0}^{n/3}\binom{n/3}j^2\binom{2j}j\binom{2(n/3-j)}{n/3-j}=D_{n/3}\equiv1\pmod 3
\end{align*}
with the help of the induction hypothesis.

{\it Case} 2. $3\nmid n$.

It is known that
$$n^3D_n=2(2n-1)(5n^2-5n+2)D_{n-1}-64(n-1)^3D_{n-2}$$
(cf. \cite[A002895]{Sl}) which can be obtained via Zeilberger's algorithm. So we have
\begin{align*}n^3D_n\equiv&(n+1)(-n^2+n+2)D_{n-1}-(n-1)^3D_{n-2}
\\\equiv&-(n+1)^3D_{n-1}-(n-1)^3D_{n-2}\pmod 3.
\end{align*}
By Fermat's little theorem, $a^3\eq a\pmod3$ for all $a\in\Z$. Thus, by applying the induction hypothesis we obtain
$$nD_n\eq-(n+1)-(n-1)\equiv n\pmod3$$
and hence $D_n\equiv1\pmod3$ as desired.

In view of the above, we have completed the proof Lemma
\ref{lem+Dom}.  \qed
\begin{lemma}\label{lem+Dom+3} For any positive integer $n$, we have $D_n''\eq0\pmod3$.
\end{lemma}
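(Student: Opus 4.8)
The plan is to reduce the statement to the single congruence $D_k\eq1\pmod3$ already established in Lemma \ref{lem+Dom}, by first passing to the closed form of the twice-iterated binomial transformation. Recall that $D_n''$ means the transformation (\ref{2.1}) applied twice to the Domb numbers. The first thing I would record is the explicit formula
\begin{equation*}
D_n''=\sum_{k=0}^n\binom{n}{k}2^{n-k}D_k .
\end{equation*}
This comes from writing $D_n''=\sum_{m=0}^n\binom nm D_m'=\sum_{m=0}^n\binom nm\sum_{k=0}^m\binom mk D_k$, interchanging the order of summation, and then using the absorption identity $\binom nm\binom mk=\binom nk\binom{n-k}{m-k}$ together with $\sum_{j=0}^{n-k}\binom{n-k}{j}=2^{n-k}$.

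With this formula in hand the rest is immediate. By Lemma \ref{lem+Dom} we have $D_k\eq1\pmod3$ for every $k\ge0$, so for any positive integer $n$,
\begin{equation*}
D_n''\eq\sum_{k=0}^n\binom nk 2^{n-k}\cdot1=(2+1)^n=3^n\eq0\pmod3 ,
\end{equation*}
which is exactly the assertion of the lemma.

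I do not expect any genuine obstacle here: all the arithmetic content is carried by Lemma \ref{lem+Dom}, and the only thing to check is the elementary double-transform identity, which collapses under the binomial theorem precisely because $D_k$ is constant modulo $3$. The reason it is worth isolating this congruence is strategic rather than difficult. Since $3\nmid D_m$ (indeed $D_m\eq1\pmod3$), one cannot read off a factor of $3^n$ from the Hankel matrix of $(D_m)_{m\ge0}$ directly; but Lemma \ref{le+lay} allows one to replace $(D_m)$ by its twice-iterated transform $(D_m'')$ without changing the value of the Hankel determinant, and because $3\mid D_m''$ for all $m\ge1$, one can then factor a $3$ out of each of the last $n$ rows of $[D_{i+j}'']_{0\le i,j\le n}$. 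This yields the divisibility by $3^n$ that, combined with the factor $4^n$ extracted via Lemma \ref{lem+center binom} and Lemma \ref{lem+odd}, accounts for the "$12^{-n}$'' in Theorem \ref{Th1.2}.
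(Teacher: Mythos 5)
Your proof is correct and follows essentially the same route as the paper: both reduce the claim to $D_k\equiv1\pmod3$ (Lemma \ref{lem+Dom}) and then collapse the double binomial transform via the binomial theorem to obtain $3^n\equiv0\pmod3$. The only cosmetic difference is that you first derive the closed form $D_n''=\sum_{k=0}^n\binom nk2^{n-k}D_k$, whereas the paper computes in two stages, first showing $D_k'\equiv2^k\pmod3$ and then summing; the arithmetic is identical.
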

\Proof. By Lemma \ref{lem+Dom}, we have
$$D_k'=\sum_{j=0}^k\binom{k}jD_j\equiv\sum_{j=0}^k\binom{k}j\equiv2^k\pmod
3$$ for all $k\in\N$. Thus
$$D_n''=\sum_{k=0}^n\bi nk D_k'\equiv\sum_{j=0}^n\binom{n}k2^k\equiv3^n\equiv0\pmod3.$$ This completes the proof. \qed


\medskip
\noindent{\it Proof of Theorem \ref{Th1.2}}. (i) By Lemma \ref{lem+center binom}, the
sequence $(D_m)_{m\ge 0}$ satisfies the conditions of Lemma
\ref{lem+odd} with $k=2$. Thus, $4^{-n}|D_{i+j}|_{0\leq i,j\leq n}$ is always
an odd integer. On the other hand, it follows from Lemmas \ref{le+lay} and
\ref{lem+Dom+3} that $3^{-n}|D_{i+j}|_{0\leq i,j\leq n}$ is also an integer.
Hence $12^{-n}|D_{i+j}|_{0\leq i,j\leq n}$ is odd. Moreover, by
Lemma \ref{lem+transformation}, we know that $(D_m)_{m\geq0}$ is a
Stieltjes moment sequence since $(\binom{2m}{m})_{m\geq0}$ is a
Stieltjes moment sequence. So $12^{-n}|D_{i+j}|_{0\leq i,j\leq n}$
is always a positive odd integer.

(ii) By Lemma \ref{lem+PSzeg}, we know that $(D_m^{(1)})_{m\geq0}$
is a Stieltjes moment sequence since $(\binom{2m}{m})_{m\geq0}$ is a
Stieltjes moment sequence. So $|D^{(1)}_{i+j}|_{0\leq i,j\leq
n}$ is always nonnegative. Hence, with the help of Lemma
\ref{lem+center binom} and Lemma \ref{lem+odd},
$4^{-n}|D^{(1)}_{i+j}|_{0\leq i,j\leq n}$ is a positive
odd integer. It is known (cf. \cite[A053175]{Sl}) that
$$P_m=2^m\sum_{k=0}^{\lfloor m/2\rfloor}\binom{n}{2k}\binom{2k}{k}^24^{m-2k}=2^mD^{(1)}_m$$
for any $m\in\N$. Therefore,
\begin{align*}\f{|P_{i+j}|_{0\leq i,j\leq n}}{2^{n(n+3)}}=&\f{|2^{i+j}D^{(1)}_{i+j}|_{0\le i,j\le n}}{2^{n(n+3)}}
=\f{\prod_{i=1}^n2^i\times \prod_{j=1}^n 2^j}{2^{n(n+3)}}|D^{(1)}_{i+j}|_{0\le i,j\le n}
\\=&\f{2^{n(n+1)}}{2^{n(n+3)}}|D^{(1)}_{i+j}|_{0\le i,j\le n}=\f{|D^{(1)}_{i+j}|_{0\le i,j\le n}}{4^n},
\end{align*} which is a positive odd integer. \qed

\section{Proof of Theorem \ref{Th1.3}}
\setcounter{lemma}{0} \setcounter{theorem}{0}
\setcounter{corollary}{0} \setcounter{remark}{0}
\setcounter{equation}{0} \setcounter{conjecture}{0}

 \begin{lemma}\label{lem+apery b}
For any positive integer $n$, we have
$$b_n'\equiv0\pmod {2}\ \ \mbox{and}\ \
b_n''\equiv0\pmod {5}.$$
 \end{lemma}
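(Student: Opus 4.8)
The plan is to reduce both statements to simple residue patterns for the Apéry numbers $b_n$ themselves and then apply the binomial transform once or twice, exactly in the spirit of the Domb-number argument in Lemmas \ref{lem+Dom} and \ref{lem+Dom+3}. For the divisibility by $2$, I would first establish that $b_n\equiv1\pmod2$ for every $n\in\N$. Modulo $2$ one has $\binom nk^2\equiv\binom nk$, so $b_n\equiv\sum_{k=0}^n\binom nk\binom{n+k}k\pmod2$. By Kummer's theorem $\binom nk$ is odd exactly when the binary support of $k$ is contained in that of $n$, while $\binom{n+k}k$ is odd exactly when adding $k$ and $n$ in base $2$ produces no carry, i.e.\ when $k$ and $n$ have disjoint binary supports. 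These two conditions force $k=0$, so the only odd summand is the $k=0$ term, which equals $1$; hence $b_n\equiv1\pmod2$. It follows that $b_n'=\sum_{k=0}^n\binom nk b_k\equiv\sum_{k=0}^n\binom nk=2^n\equiv0\pmod2$ for every $n\ge1$.

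The mod-$5$ statement is the harder one, and its core is the congruence $b_n\equiv3^n\pmod5$ (equivalently $b_n\equiv(-2)^n\pmod5$). I would prove this by induction on $n$, the cases $n=0,1$ being immediate from $b_0=1$ and $b_1=3$. For the inductive step I split according to whether $5\mid n$. If $5\mid n$, write $n=5n_1$; applying Lucas' congruence base $5$ kills every term with $5\nmid k$ and contracts the remaining ones, since $\binom{5n_1}{5k_1}\equiv\binom{n_1}{k_1}$ and $\binom{5(n_1+k_1)}{5k_1}\equiv\binom{n_1+k_1}{k_1}\pmod5$, giving
$$b_n\equiv\sum_{k_1}\binom{n_1}{k_1}^2\binom{n_1+k_1}{k_1}=b_{n_1}\pmod5.$$
Because $3^5\equiv3\pmod5$ we have $3^n\equiv3^{n_1}\pmod5$, so the induction hypothesis yields $b_n\equiv b_{n_1}\equiv3^{n_1}\equiv3^n$. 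If $5\nmid n$, I would invoke the recurrence $n^2b_n=(11n^2-11n+3)b_{n-1}+(n-1)^2b_{n-2}$ (obtainable via Zeilberger's algorithm, cf.\ \cite[A005258]{Sl}). Reducing modulo $5$ and substituting $b_{n-1}\equiv3^{n-1}$, $b_{n-2}\equiv3^{n-2}$, the desired conclusion $b_n\equiv3^n$ is equivalent, after multiplying by $n^2$ and cancelling $3^{n-2}$, to the identity $9n^2\equiv4n^2\pmod5$, which holds; since $5\nmid n$ the factor $n^2$ is invertible mod $5$, so $b_n\equiv3^n$ follows.

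With $b_n\equiv3^n\pmod5$ in hand, the two binomial transforms finish the proof in one line each, mirroring Lemma \ref{lem+Dom+3}: first
$$b_k'=\sum_{j=0}^k\binom kj b_j\equiv\sum_{j=0}^k\binom kj 3^j=4^k\pmod5,$$
and then
$$b_n''=\sum_{k=0}^n\binom nk b_k'\equiv\sum_{k=0}^n\binom nk 4^k=5^n\equiv0\pmod5$$
for every $n\ge1$. I expect the main obstacle to be the congruence $b_n\equiv3^n\pmod5$, and within it the case $5\mid n$: there the leading coefficient $n^2$ of the recurrence vanishes modulo $5$, so the recurrence alone carries no information, and it is the Lucas-type contraction $b_n\equiv b_{n/5}\pmod5$ that rescues the induction, precisely as the case $3\mid n$ did for the Domb numbers in Lemma \ref{lem+Dom}.
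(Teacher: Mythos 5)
Your proposal is correct and follows essentially the same route as the paper: the central congruence $b_n\equiv 3^n\pmod 5$ is established by the same induction, splitting on $5\mid n$ (Lucas contraction $b_n\equiv b_{n/5}\pmod 5$) versus $5\nmid n$ (the Zeilberger recurrence $n^2b_n=(11n^2-11n+3)b_{n-1}+(n-1)^2b_{n-2}$, with the same check $4n^2\equiv 9n^2\pmod 5$), followed by the identical one-line binomial-transform computations giving $b_k'\equiv 4^k$ and $b_n''\equiv 5^n\equiv 0\pmod 5$. The only cosmetic difference is the parity step: you get $b_n\equiv 1\pmod 2$ via Kummer's theorem (oddness of $\binom{n}{k}$ and of $\binom{n+k}{k}$ imposes incompatible binary-support conditions unless $k=0$), whereas the paper rewrites $\binom{n}{k}^2\binom{n+k}{k}=\binom{n}{k}\binom{n+k}{2k}\binom{2k}{k}$ and uses that $\binom{2k}{k}$ is even for $k\ge 1$ --- both are immediate.
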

 \Proof. For any positive integer $m$, we have
 $$b_m=\sum_{k=0}^m\bi mk\bi{m+k}{2k}\bi{2k}k=1+\sum_{k=1}^m\bi mk\bi{m+k}{2k}2\bi{2k-1}{k-1}\eq1\pmod 2.$$
 Thus
 $$b_n'=\sum_{k=0}^n\bi nkb_k\eq\sum_{k=0}^n\bi nk=2^n\eq0\pmod2.$$
If $b_{k}\equiv 3^k \pmod {5}$ for all $k\in\N$, then
 \begin{align*}
b_n''=&\sum_{m=0}^n\bi nm\sum_{k=0}^m\bi mkb_k
\\\equiv&\sum_{m=0}^n\binom{n}{m}\sum_{k=0}^m\bi mk3^k=\sum_{m=0}^n\bi nm4^m=5^n\eq0\pmod5.
\end{align*}

It remains to prove $b_k\eq 3^k\pmod5$ by induction on $k\in\N$.
Note that $b_k=3^k$ for $k=0,1$. Let $k>1$ be an integer.
It is well known that (cf. \cite[A005258]{Sl})
$$k^2b_k=(11k^2-11k+3)b_{k-1}+(k-1)^2b_{k-2}.$$
If $b_{k-1}\eq 3^{k-1}\pmod5$ and $b_{k-2}\eq 3^{k-2}\pmod5$, then
$$k^2b_k\eq (3(k^2-k+3)+(k-1)^2)3^{k-2}\eq9k^23^{k-2}=k^23^k\pmod5$$
and hence $b_k\eq 3^k\pmod5$ if $5\nmid k$.
When $5\mid k$, by Lucas' theorem, for each $j\in\{0,\ldots,k\}$ we have
$$\bi{k}j^2\bi{k+j}j\eq \begin{cases}\bi{k/5}{j/5}^2\bi{(k+j)/5}{j/5}\pmod5&\t{if}\ 5\mid j,
\\0\pmod5&\t{if}\ 5\nmid j.\end{cases}$$
So, if $5\mid k$ and $b_{k/5}\eq 3^{k/5}\pmod5$, then
$$b_k\eq\sum_{i=0}^{k/5}\bi{k/5}{i}^2\bi{k/5+i}i^2=b_{k/5}\eq 3^{k/5}\eq (3^5)^{k/5}=3^k\pmod5.$$
This concludes our proof. \qed

 \begin{lemma}\label{lem+apery +a}
 For each $n=3,4,\ldots$, we have $A_n'\equiv 0\pmod {24}$.
 \end{lemma}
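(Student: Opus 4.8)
The plan is to factor $24=3\cdot 8$ and prove the two congruences $A_n'\equiv0\pmod3$ (valid for $n\ge1$) and $A_n'\equiv0\pmod8$ (valid for $n\ge3$) separately, then combine them by the Chinese Remainder Theorem. The restriction $n\ge3$ in the statement should emerge precisely from the modulus $8$, which is a useful consistency check to keep in mind: already $A_1'=6$ and $A_2'=84$ fail to be divisible by $8$.

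For the prime $3$ I would first show $A_k\equiv2^k\pmod3$ for every $k\in\N$ by induction, in the same spirit as the proof of Lemma~\ref{lem+apery b}. The Ap\'ery recurrence $n^3A_n=(34n^3-51n^2+27n-5)A_{n-1}-(n-1)^3A_{n-2}$ reduces, using $a^3\equiv a\pmod3$, to $nA_n\equiv(n+1)A_{n-1}-(n-1)A_{n-2}\pmod3$. When $3\nmid n$ one divides by $n$ and checks that $A_{n-1}\equiv2^{n-1}$ and $A_{n-2}\equiv2^{n-2}$ force $A_n\equiv2^{n-2}\equiv2^{n}\pmod3$ (since $2^2\equiv1$); when $3\mid n$, Lucas' theorem collapses the defining sum to $A_n\equiv A_{n/3}\equiv2^{n/3}\equiv2^{n}\pmod3$ (using $2^3\equiv2$, so that $n$ and $n/3$ give the same power of $2$ modulo $3$). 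The base cases $A_0=1$, $A_1=5$, $A_2=73$ start the induction. Then $A_n'=\sum_k\binom nk A_k\equiv\sum_k\binom nk2^k=3^n\equiv0\pmod3$ for all $n\ge1$.

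For the modulus $8$ the target is $A_k\equiv5^k\pmod8$, which at once yields $A_n'\equiv\sum_k\binom nk5^k=6^n=2^n3^n\equiv0\pmod8$ exactly when $n\ge3$. I would prove this through the square structure $A_n=\sum_{k=0}^n c_{n,k}^2$ with $c_{n,k}:=\binom nk\binom{n+k}k$. A square is $\equiv 0,\,1,\,4\pmod8$ according as the $2$-adic valuation $v_2(c_{n,k})$ is $\ge2$, equals $0$, or equals $1$; hence $A_n\equiv N_0+4N_1\pmod8$, where $N_0$ and $N_1$ count the $k$ with $v_2(c_{n,k})=0$ and $v_2(c_{n,k})=1$ respectively. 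By Kummer and Lucas, $\binom nk$ is odd iff the binary digits of $k$ lie under those of $n$, whereas $\binom{n+k}k$ is odd iff $k$ and $n$ have disjoint binary supports; these two conditions together force $k=0$, so $N_0=1$ for every $n$. Thus $A_n\equiv1+4N_1\pmod8$, and since $5^k\equiv1+4k\pmod8$ the whole claim collapses to the single parity statement $N_1\equiv n\pmod2$.

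This parity count is the main obstacle. Writing $v_2(c_{n,k})$ by Kummer's theorem as the total number of carries in the two base-$2$ additions $k+(n-k)$ and $n+k$, the set counted by $N_1$ splits into the $k$ with $\binom{n+k}k$ odd and $v_2\binom nk=1$, and the $k$ with $\binom nk$ odd and $v_2\binom{n+k}k=1$. I would establish $N_1\equiv n\pmod2$ by a direct analysis of the binary expansion of $n$, or equivalently by an involution on the relevant $k$ with controlled fixed points. As a cross-check that lightens the work, the Ap\'ery recurrence modulo $8$ gives $A_{2m+1}\equiv5A_{2m}\pmod8$ cleanly — for odd $n$ one has $n^2\equiv1$, so $n$ is its own inverse, while the $A_{n-2}$ term drops because $(2m)^3\equiv0\pmod8$ — so only the even-index values, i.e.\ $N_1$ even when $n$ is even, genuinely require the digit count. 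Once $N_1\equiv n\pmod2$ is secured, the two prime-power congruences combine to give $24\mid A_n'$ for all $n\ge3$, as desired.
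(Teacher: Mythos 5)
Your mod $3$ half is complete and correct, and in fact more self-contained than the paper's: you reprove from the Ap\'ery recurrence (correctly reduced, via $a^3\eq a\pmod 3$, to $nA_n\eq(n+1)A_{n-1}-(n-1)A_{n-2}\pmod 3$) together with a Lucas-theorem collapse $A_n\eq A_{n/3}\pmod 3$ the congruence $A_k\eq 2^k\pmod 3$, whence $A_n'\eq 3^n\eq0\pmod 3$; I have checked the coefficient reductions and the base cases $A_0=1$, $A_1=5$, $A_2=73$, and they are right. Your reduction of the mod $8$ half is also correct as far as it goes: writing $A_n=\sum_k c_{n,k}^2$ with $c_{n,k}=\binom nk\binom{n+k}k$, the Kummer/Lucas argument that $c_{n,k}$ is odd only for $k=0$ (digit domination plus disjoint binary supports force $k=0$) gives $A_n\eq 1+4N_1\pmod 8$, and since $5^n\eq 1+4n\pmod 8$ the target $A_k\eq5^k\pmod 8$ is indeed equivalent to $N_1\eq n\pmod 2$. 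Your recurrence observation also checks out: for odd $n$ one has $n^3\eq n$, $34n^3-51n^2+27n-5\eq 61n-56\eq 5n$, and $(n-1)^3\eq0\pmod 8$, so $A_{2m+1}\eq 5A_{2m}\pmod 8$.

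However, the proof is not complete: the parity claim $N_1\eq n\pmod 2$ --- equivalently, after your odd-index reduction, $A_{2m}\eq1\pmod 8$ --- is precisely the substantive content of Gessel's mod $8$ theorem, and you leave it unproven, deferring to ``a direct analysis of the binary expansion'' or ``an involution'' that is never exhibited. This is a genuine gap rather than a routine verification: for even $n$ the recurrence degenerates modulo $8$ (the left side $n^3A_n$ vanishes, so it yields a consistency relation between $A_{n-1}$ and $A_{n-2}$ but does not determine $A_n$), as you yourself note, so no induction on the recurrence can close it, and counting the $k$ for which the two base-$2$ additions $k+(n-k)$ and $n+k$ produce exactly one carry in total genuinely requires a new argument. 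The paper sidesteps all of this by quoting Gessel \cite{Ges} for $A_{2k}\eq1\pmod 8$, $A_{2k+1}\eq5\pmod 8$ and $A_k\eq(-1)^k\pmod 3$, packaging them as $A_k\eq 3-2(-1)^k\pmod{24}$ and summing to get $A_n'\eq 3\cdot 2^n\pmod{24}$. Your intended route would, if completed, amount to reproving Gessel's congruences from scratch (and your reduction shows it is not a wrong approach --- the claim $N_1\eq n\pmod 2$ is true, being equivalent to Gessel's result); as it stands, though, it establishes only the mod $3$ divisibility, and the cleanest repair is to cite \cite{Ges} for the mod $8$ congruence or to actually carry out the carry-counting argument.
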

  \Proof.
 From Gessel \cite{Ges}, we have  $A_{2k}\equiv1\pmod {8}$, $A_{2k+1}\equiv5\pmod {8}$ and $A_{k}\equiv(-1)^k\pmod {3}$ for
 all $k\in\N$. This implies that
 $A_k\equiv3-2(-1)^k\pmod {24}$ for all $k\in\N$.
Therefore,
$$
A_n'=\sum_{k=0}^n\binom{n}{k}A_k\equiv \sum_{k=0}^n\binom{n}{k}(3-2(-1)^k)= 3\times
2^{n}\equiv 0\pmod {24}.$$
This concludes the proof.
 \qed

\medskip
\noindent {\it Proof of Theorem \ref{Th1.3}}. (i) By Lemma \ref{le+lay} and Lemma
\ref{lem+apery b}, $10^{-n}|b_{i+j}|_{0\leq i,j\leq n}$ is
always an integer.

(ii) Let $x_n=A_n'=\sum_{k=0}^n\binom{n}{k}A_k$. Then $x_0=1$, $x_1=6$,
$x_2=84$, $x_3=1680=70\times 24.$ By Lemma \ref{le+lay}, we have
$$24^{-n}|A_{i+j}|_{0\leq i,j\leq n}=24^{-n}|x_{i+j}|_{0\leq i,j\leq
n}.$$ If a term $X$ in the
Laplace expansion of $|x_{i+j}|_{0\leq i,j\leq n}$ only contains one
entry in $\{x_0,x_1,x_2\}$, then by Lemma \ref{lem+apery +a} we have $X\equiv
0\pmod {24^n}$.
 So, we only need to consider all terms containing two
or three entries in $\{x_0,x_1,x_2\}$. Clearly,
$x_0x_2=84$, $x_1x_1=36$, $x_1x_2\equiv 0\pmod {24}$, $x_2x_2\equiv
0\pmod {24}$ and $x_2x_2x_2\equiv 0\pmod {24^2}$. Thus, among all
the terms in the Laplace expansion of $|x_{i+j}|_{0\leq i,j\leq n}$,
it suffices to consider those containing $x_0x_2$ and $x_1x_1$.
Hence, for some integer $M\equiv0\pmod{24^{n-1}}$ we have
$$|x_{i+j}|_{0\leq i,j\leq n}\equiv
(x_0x_2-x_1x_1)M\equiv0\pmod {24^n}.$$
This concludes the proof. \qed

\Ack. The authors would like to thank the referee for helpful comments.
\bigskip

\end{document}